\newcommand{\Sym}{\mathrm{Sym}}
\newcommand{\Alt}{\mathrm{Alt}}
\newcommand{\supp}{\mathrm{supp}}
\newcommand{\fin}{\mathrm{fin}}
\newcommand{\w}{\omega}
\newcommand{\id}{\mathrm{id}}
\newcommand{\IR}{\mathbb R}
\newcommand{\IQ}{\mathbb Q}
\newcommand{\U}{\mathcal U}
\newcommand{\HH}{\mathcal H}
\newcommand{\IN}{\mathbb N}
\newcommand{\IP}{\mathbb P}
\newcommand{\IC}{\mathbb C}
\newcommand{\Tau}{\mathcal T}
\newcommand{\cn}{cn}
\newcommand{\Zeta}{\mathfrak Z}
\newtheorem{theorem}{Theorem}%[section]
\newtheorem{lemma}[theorem]{Lemma}
\newtheorem{problem}[theorem]{Problem}
\newtheorem{question}[theorem]{Question}
\newtheorem{proposition}[theorem]{Proposition}
\newtheorem{example}[theorem]{Example}
\theoremstyle{definition}
\begin{document}
\title{Cardinal invariants distinguishing permutation groups}
\author{Taras Banakh and Heike Mildenberger}
\address{Taras Banakh: Jan Kochanowski University in Kielce (Poland) and Ivan Franko National University of Lviv (Ukraine)}
\email{t.o.banakh@gmail.com}

\address{Heike Mildenberger:  Abteilung f\"ur Mathematische Logik,
Mathematisches Institut, Universit\"at Freiburg, Eckerstr.~1,
 79104 Freiburg im Breisgau, Germany}
\email{heike.mildenberger@math.uni-freiburg.de}

\begin{abstract} We prove that for infinite cardinals $\kappa<\lambda$ the alternating group $\Alt(\lambda)$ (of even permutations) of $\lambda$ is not embeddable into the symmetric group $\Sym(\kappa)$ (of all permutations) of $\kappa$. To prove this fact we introduce and study several monotone cardinal group invariants which take value $\kappa$ on the groups  $\Alt(\kappa)$ and $\Sym(\kappa)$.
\end{abstract}

\subjclass{03E15, 20B07, 20B30, 20B35, 54A10, 54A25}

\date{June 22, 2015}

\keywords{Symmetric groups, alternating group, $\kappa^+$-compatibility condition, embeddings, $\sigma$-discrete space, weight, spread, uniformly discrete subset, group topology, linear group topology, cardinal group invariant}

\maketitle

By Cayley's classical theorem \cite[1.6.8]{Rob}, each group $G$ embeds into the group $\Sym(|G|)$ of all bijective transformations of the cardinal $|G|$. Observe that for a symmetric group $G=\Sym(\kappa)$ on an infinite cardinal $\kappa$ Cayley's Theorem can be improved: the group $G=\Sym(\kappa)$ embeds into the symmetric group $\Sym(\log|G|)$. This suggests the following question: {\em can each infinite group $G$ be embeded into the symmetric group $\Sym(\log|G|)$?} Here for a cardinal $\kappa$ by $\log(\kappa)=\min\{\lambda:\kappa\le 2^\lambda\}$ we denote the {\em logarithm} of $\kappa$.
Another question of the same flavor asks: {\em can the symmetric group $\Sym(\kappa)$ on an infinite cardinal $\kappa$ be embedded into the symmetric group $\Sym(\lambda)$ on a smaller cardinal $\lambda<\kappa$?} In this paper we shall give negative answers to both questions.
First, we need to introduce some notation.

Let $\kappa$ be a (finite or infinite) cardinal. By ${\rm Sym}(\kappa)$ we denote the set of
bijective functions from $\kappa$ to $\kappa$,
also called the {\em permutations} of $\kappa$.  The set $\Sym(\kappa)$ endowed with the operation of composition of permutations is a group called the {\em symmetric group} on $\kappa$. This group contains a normal subgroup $\Sym_\fin(\kappa)$ consisting of permutations $f:\kappa\to\kappa$ with finite support $\supp(f)=\{x\in\kappa:f(x)\ne x\}$. A permutation $f:\kappa\to\kappa$ with two-element support $\supp(f)$ is called a {\em transposition} of $\kappa$. It is well-known that each finitely supported permutation can be written as a finite composition of transpositions. A permutation $f:\kappa\to\kappa$ is called {\em even} if it can be written as the composition of an even number of transpositions. The even permutations form a subgroup $\Alt(\kappa)$ of $\Sym(\kappa)$ called the {\em alternating group} on $\kappa$. It is a normal subgroup of index 2 in $\Sym_\fin(\kappa)$. So, we get the inclusions $$\Alt(\kappa)\subset\Sym_\fin(\kappa)\subset\Sym(\kappa).$$
For an infinite cardinal $\kappa$ these groups have cardinalities $|\Alt(\kappa)|=|\Sym_{\fin}(\kappa)|=\kappa$ and $\Sym(\kappa)=2^\kappa$.

The following theorem answers in negative the questions posed at the beginning of the paper.

\begin{theorem}\label{t1}
Let $\kappa <\lambda$ be two infinite cardinals. Then
there is no embedding
of $\Alt(\lambda)$
into $\Sym(\kappa)$.
\end{theorem}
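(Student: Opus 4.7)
The plan is to produce a cardinal-valued, subgroup-monotone group invariant $\iota$ satisfying $\iota(\Sym(\kappa))\le \kappa < \lambda \le \iota(\Alt(\lambda))$. Any embedding $\Alt(\lambda)\hookrightarrow\Sym(\kappa)$ would then yield the contradiction $\lambda\le\iota(\Alt(\lambda))\le\iota(\Sym(\kappa))\le\kappa$. The abstract confirms this is the intended route, and the keywords (weight, uniformly discrete subset, group topology, $\kappa^+$-compatibility) point to a topological flavour of invariant.

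Concretely, I would take
$$\iota(G)=\min\{w(\Tau):\Tau \text{ is a Hausdorff group topology on } G\},$$
the minimum weight of a Hausdorff group topology on $G$. Monotonicity is automatic: if $H\le G$ and $\Tau$ is a Hausdorff group topology on $G$ of weight $\kappa$, its restriction is a Hausdorff group topology on $H$ of weight at most $\kappa$. The upper bound $\iota(\Sym(\kappa))\le \kappa$ comes from the standard topology of pointwise convergence on $\Sym(\kappa)$, where $\kappa$ carries the discrete topology: a clopen neighborhood base at $\id$ is provided by the pointwise stabilizers $U_F=\{f\in\Sym(\kappa):f\rest F=\id\rest F\}$ of finite $F\subset\kappa$, and there are only $\kappa$ such stabilizers for infinite $\kappa$. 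Note that although $\Sym(\kappa)$ and $\Alt(\lambda)$ carry natural topologies, we only need a group embedding, not a topological one, because $\iota$ is defined purely algebraically.

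The substantive step — and the main obstacle — is the lower bound $\iota(\Alt(\lambda))\ge \lambda$: every Hausdorff group topology on $\Alt(\lambda)$ must have weight at least $\lambda$. The plan here is to exhibit a family $\{g_\alpha\such\alpha<\lambda\}\subset\Alt(\lambda)$ that is \emph{uniformly discrete} in every such topology $\Tau$ — that is, some $\Tau$-neighborhood $V$ of $\id$ satisfies $g_\alpha V\cap g_\beta V=\emptyset$ whenever $\alpha\ne\beta$, which forces $w(\Tau)\ge\lambda$. The natural candidates are pairwise disjointly supported 3-cycles $g_\alpha=(x_\alpha,y_\alpha,z_\alpha)$ indexed by $\lambda$. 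The argument should exploit the near-simplicity of $\Alt(\lambda)$: starting from an arbitrary non-trivial open neighborhood $W$ of $\id$, one uses continuity of conjugation and commutation to manufacture commutators of elements in $W$ with chosen 3-cycles that shrink $W$ to a smaller $V\subset W$ separating all the $g_\alpha g_\beta^{-1}$ from $\id$.

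I expect the heart of the technical work to be the bootstrapping step above — the ``$\kappa^+$-compatibility condition'' of the keywords probably packages exactly this kind of shrinking argument and isolates the combinatorial obstruction to fitting $\lambda$-many pairwise separated elements into the compatibility lattice of a group topology with a small base. Once that lemma is in hand, the rest is just assembly: monotonicity plus the two bounds on $\iota$.
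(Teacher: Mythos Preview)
Your overall architecture matches the paper exactly: a monotone cardinal group invariant, bounded above on $\Sym(\kappa)$ by $\kappa$ via the pointwise-convergence topology and below on $\Alt(\lambda)$ by $\lambda$. The upper bound argument is precisely the paper's (indeed the paper uses the same stabilizer subgroups to get the stronger $w_l(\Sym(\kappa))\le\kappa$).

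The lower-bound sketch, however, contains a concrete error. Pairwise \emph{disjointly} supported $3$-cycles are \emph{not} uniformly discrete in the pointwise topology $\tau_p$ itself, which is certainly a Hausdorff group topology on $\Alt(\lambda)$: basic $\tau_p$-neighbourhoods of $\id$ are stabilizers $G_A$ of finite sets $A\subset\lambda$, and all but finitely many of your $g_\alpha$ have support disjoint from $A$, so $g_\alpha^{-1}g_\beta\in G_A$ for most pairs $\alpha\ne\beta$. No open $V\ni\id$ can separate them. The family that does work (in the paper's proof that $u_c=\kappa$) has the opposite shape --- permutations $g_i$ with $g_i(0)=i$, all \emph{sharing} the point $0$, separated by the subgroup $G_{\{0\}}$ --- and the real content is then proving that $G_{\{0\}}$ is open in \emph{every} admissible topology, which the paper accomplishes by identifying $\tau_p$ with a restricted Zariski topology built from commutator conditions. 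You have also misread the $\kappa^+$-compatibility condition: it is not a topological shrinking device but a purely algebraic invariant $\cn$ concerning finite subgroups and retracting homomorphisms. The paper's cleanest route to the lower bound uses it directly and avoids topology altogether: take $\lambda$ many copies of $\Alt(4)$ on $4$-sets $\{0,1,2,i\}$ sharing three points; any two of them generate an $\Alt(5)$, and the simplicity of $\Alt(5)$ forbids any homomorphism onto $\Alt(4)$, giving $\cn(\Alt(\lambda))\ge\lambda$.
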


The idea of the proof of this theorem is rather natural: find a cardinal characteristic $\varphi(G)$ of a group $G$ which is invariant under isomorphisms of groups, is monotone under taking subgroups, and takes value $\kappa$ on the groups $\Alt(\kappa)$ and $\Sym(\kappa)$. Then for any infinite cardinals $\kappa<\lambda$ we would have $\varphi(\Alt(\lambda))=\lambda\not\le \kappa=\varphi(\Sym(\kappa))$, and the monotonicity of $\varphi$ would imply that $\Alt(\lambda)$ does not embed into $\Sym(\kappa)$. In the sequel, cardinal characteristics of groups which are invariant under isomorphisms of groups will be called {\em cardinal group invariants}. A cardinal group invariant $\varphi$ is called {\em monotone} if $\varphi(H)\le\varphi(G)$ for any subgroup $H$ of a group $G$.

Many examples of monotone cardinal group invariants can be produced as minimizations of cardinal characteristics of (semi)topological groups over certain families of admissible topologies on a given group. Now we explain this approach in more details. First, we define four families $\Tau_s(G)$, $\Tau_c(G)$, $\Tau_g(G)$, and $\Tau_l(G)$ of admissible topologies on a group $G$.

A topology $\tau$ on a group $G$ is called {\em shift-invariant} if for every $a,b\in G$ the two-sided shift $s_{a,b}:G\to G$, $s_{a,b}:x\mapsto axb$, is a homeomorphism of the topological space $(G,\tau)$.
This is equivalent to saying that the group multiplication $G\times G\to G$, $(x,y)\to xy$, is separately continuous. A group endowed with a shift-invariant topology is called a {\em semitopological group}, (see \cite{AT}). For a group $G$ by $\Tau_s(G)$ we  denote the family of all Hausdorff shift-invariant topologies on $G$.

We shall say that a topology $\tau$ on a group $G$ has {\em separately continuous commutator} if the function $G\times G\to G$, $(x,y)\mapsto xyx^{-1}y^{-1}$, is separately continuous. By $\Tau_c(G)$ we denote the family of Hausdorff shift-invariant topologies on $G$ having separately continuous commutator.

A topology $\tau$ on a group $G$ is called a {\em group topology} if the function $G\times G\to G$, $(x,y)\mapsto xy^{-1}$, is jointly continuous. By $\Tau_g(G)$ we denote the family of Hausdorff group topologies on $G$.

A group topology $\tau$ on a group $G$ is called {\em linear} if it has a neighborhood base at the unit $1_G$ of $G$ consisting of $\tau$-open subgroups of $G$. By $\Tau_l(G)$ we denote the  family of linear Hausdorff group topologies on $G$.

It follows that $\Tau_s(G)\supset\Tau_c(G)\supset\Tau_g(G)\supset\Tau_l(G)$ for every group $G$.

By a {\em cardinal topological invariant of semitopological groups} we understand a function $\varphi$ assigning to each semitopological group $G$ some cardinal $\varphi(G)$ so that $\varphi(G)=\varphi(H)$ for any topologically isomorphic semitopological groups $G,H$. We shall say that the function $\varphi$ is {\em monotone} if $\varphi(H)\le\varphi(G)$ for any subgroup $H$ of a semitopological group $G$.

Any (monotone) cardinal topological invariant $\varphi$ of semitopological groups induces four (monotone) cardinal group invariants $\varphi_s$, $\varphi_c$, $\varphi_g$, $\varphi_l$ assigning to each group $G$ the cardinals
\begin{itemize}
\item $\varphi_s(G)=\min\{\varphi(G,\tau):\tau\in\Tau_s(G)\}$,
\item $\varphi_c(G)=\min\{\varphi(G,\tau):\tau\in\Tau_c(G)\}$,
\item $\varphi_g(G)=\min\{\varphi(G,\tau):\tau\in\Tau_g(G)\}$,
\item $\varphi_l(G)=\min\{\varphi(G,\tau):\tau\in\Tau_l(G)\}$.
\end{itemize}
The inclusions $\Tau_s(G)\supset\Tau_c(G)\supset\Tau_g(G)\supset\Tau_l(G)$ imply the inequalities
$$\varphi_s(G)\le\varphi_c(G)\le \varphi_g(G)\le\varphi_l(G).$$

We shall apply this construction to  three cardinal topological invariants of semitopological groups: the weight $w$, the spread $s$ and the uniform spread $u$. A subset $D$ of a semi-topological group $(G,\tau)$ is called {\em uniformly discrete} if there exists a $\tau$-open neighborhood $U\subset G$ of the unit $1_G$ such that $y\notin xU$ for any distinct points $x,y\in G$.
For a semi-topological group $(G,\tau)$  let
\begin{itemize}
\item $w(G,\tau)=\min\{|\mathcal B|:\mathcal B\subset\tau$ is a base of the topology $\tau\}$ be the {\em weight} of $(G,\tau)$;
\item $s(G,\tau)=\sup\{|D|:D\subset G$ is a discrete subspace of $(G,\tau)\}$ be the {\em spread} of $(G,\tau)$ and
\item $u(G,\tau)=\sup\{|D|:D\subset G$ is a uniformly discrete subset of $(G,\tau)\}$ be the {\em uniform spread} of $(G,\tau)$.
\end{itemize}
Observe that the weight and spread of $(G,\tau)$ depend only on the topology $\tau$ whereas the definition of the uniform spread involves both structures (algebraic and topological) of the semitopological group $(G,\tau)$. Taking into account that each uniformly discrete subset of a semi-topological group $(G,\tau)$ is discrete, we conclude that $u(G)\le s(G)\le w(G)$. Theorem~5.5 of \cite{Hodel} implies that $w(G)\le 2^{2^{s(G)}}$. It is easy to see that $u,s,w$ are monotone cardinal topological invariants of semitopological groups. Minimizing these cardinal functions over the families $\Tau_s(G)$, $\Tau_c(G)$, $\Tau_g(G)$ and $\Tau_l(G)$ we obtain 12 monotone cardinal group invariants that relate as follows. In the diagram an arrow $\varphi\to\psi$ between two cardinal group invariants $\varphi,\psi$ indicates that $\varphi(G)\le\psi(G)$ for any group $G$.
$$
\xymatrix{
w_s\ar[r]&w_c\ar[r]&w_g\ar[r]&w_l\ar[r]&|\cdot|\\
s_s\ar[r]\ar[u]&s_c\ar[r]\ar[u]&s_g\ar[r]\ar[u]&s_l\ar[u]\\
u_s\ar[r]\ar[u]&u_c\ar[r]\ar[u]&u_g\ar[r]\ar[u]&u_l\ar[u]
}
$$

Next, we define a combinatorial cardinal group invariant $\cn(G)$ called the {\em weak compatibility number} of a group $G$. It is defined as the smallest infinite cardinal $\kappa$ for which the group $G$ satisfies the {\em weak $\kappa^+$-compatibility condition}:
\begin{itemize}
\item  for any finite group $F$, and isomorphisms $f_i:F\to F_i$, $i<\kappa$, onto finite subgroups $F_i$ of $G$ there are two indices $i<j<\kappa^+$ and a homomorphism $\phi:\langle F_i\cup F_j\rangle\to F$ such that $\phi\circ f_i=\phi\circ f_j=\id_F$.
\end{itemize}
Here by $\kappa^+$ we denote the successor cardinal of $\kappa$ and for a subset $A\subset G$ by $\langle A\rangle$ we denote the subgroup of $G$ generated by $A$. The weak $\kappa^+$-compatibility condition is a weak version of a notion introduced in \cite[Def. 1.9]{JST}. The definition of the weak compatibility number implies that it is a monotone cardinal group invariant. Observe that each torsion-free group $G$ has $\cn(G)=\w$, so $\cn(G)$ can be much smaller than the cardinal $s_s(G)\ge \log\log|G|$.

In Proposition~\ref{p:lw-char} we shall present an algebraic description of the linear weight $w_l$ and using this description will prove that $\cn(G)\le w_l(G)$ for any group $G$. This inequality allows us to add the weak compatibility number $\cn$ to the diagram describing the relations between cardinal group invariants and obtain the diagram:
$$
\xymatrix{
w_s\ar[r]&w_c\ar[r]&w_g\ar[r]&w_l&\cn.\ar[l]\\
s_s\ar[r]\ar[u]&s_c\ar[r]\ar[u]&s_g\ar[r]\ar[u]&s_l\ar[u]\\
u_s\ar[r]\ar[u]&u_c\ar[r]\ar[u]&u_g\ar[r]\ar[u]&u_l\ar[u]
}
$$

The following theorem implies Theorem~\ref{t1} and can be considered as a main result of this paper.

\begin{theorem}\label{main} For any infinite cardinal $\kappa$ and a group $G$ with $\Alt(\kappa)\subset G\subset \Sym(\kappa)$ we get $$\kappa=s_s(G)=u_c(G)=w_l(G)=\cn(G).$$
\end{theorem}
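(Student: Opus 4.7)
By the diagram of invariants and the inequality $\cn\le w_l$, we have $\max\{s_s(G),u_c(G),\cn(G)\}\le w_l(G)$, so it suffices to prove the single upper bound $w_l(G)\le\kappa$ together with the three lower bounds $s_s(G),u_c(G),\cn(G)\ge\kappa$. All three lower bounds reduce by monotonicity to the case $G=\Alt(\kappa)$.

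\textbf{Upper bound via pointwise convergence.} I would topologize $G$ by the topology $\tau_p$ of pointwise convergence inherited from $\kappa^\kappa$ (with $\kappa$ discrete). The pointwise stabilizers $\{g\in G:g|_A=\id_A\}$, $A\in[\kappa]^{<\w}$, form a neighborhood base at $1_G$ consisting of open subgroups, so $\tau_p\in\Tau_l(G)$. For each finite $A$, the cosets of the stabilizer are in bijection with the injections $A\to\kappa$, of which there are $\kappa$; since $|[\kappa]^{<\w}|=\kappa$, the full base has size $\kappa$. Hence $w_l(G)\le w(G,\tau_p)\le\kappa$.

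\textbf{Lower bounds.} For $\cn(\Alt(\kappa))\ge\kappa$, take $F=S_3$ and, for $i<\kappa$, pick a copy $F_i\le\Alt(\kappa)$ of $S_3$ supported on a $5$-element subset $B_i\subset\kappa$, with the $B_i$'s pairwise disjoint; fix isomorphisms $f_i\colon F\to F_i$. For any pair $i\ne j$ the supports are disjoint, so $\la F_i\cup F_j\ra\cong F_i\times F_j$, and any homomorphism $\phi\colon F_i\times F_j\to F$ with $\phi\circ f_i=\phi\circ f_j=\id_F$ would have to satisfy $\phi(x,y)=f_i^{-1}(x)f_j^{-1}(y)$, which is a homomorphism only if $F$ is abelian---a contradiction since $F=S_3$. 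The same family, restricted to any index set of size $\lambda^+$ with $\lambda<\kappa$, witnesses failure of the weak $\lambda^+$-compatibility condition. For $s_s(\Alt(\kappa)),u_c(\Alt(\kappa))\ge\kappa$, fix a topology $\tau\in\Tau_s(\Alt(\kappa))$, respectively $\tau\in\Tau_c(\Alt(\kappa))$, and aim to show that a family $(c_i)_{i<\kappa}$ of $3$-cycles on pairwise disjoint triples is $\tau$-discrete, respectively uniformly discrete. By shift-invariance this reduces to finding a neighborhood of $1_G$ disjoint from $\{c_i^{-1}c_j:i\ne j\}$, a set contained in the single $\Alt(\kappa)$-conjugacy class $C$ of permutations of cycle type $(3,3)$. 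Hausdorffness separates $1_G$ from some particular element of $C$; in the $\Tau_c$ case, separate continuity of $g\mapsto[g,h]$ together with the transitivity of the conjugation action of $\Alt(\kappa)$ on $C$ should allow upgrading this pointwise separation to a single neighborhood missing all of $C$ at once.

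\textbf{Main obstacle.} The decisive technical step is this last upgrade: producing a single neighborhood of $1_G$ missing the entire conjugacy class $C$, using only separately continuous multiplication and commutator. The plan is to exploit that any two elements of $C$ are related by an inner automorphism of $\Alt(\kappa)$, and that separate continuity of the commutator permits propagating a fixed neighborhood along such conjugacies without uncontrolled shrinking. Arranging this cleanly, and covering all of $C$ by a controlled sequence of such steps, is where the real work of the proof will lie.
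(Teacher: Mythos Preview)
Your overall scaffold (prove $w_l(G)\le\kappa$ and then the three lower bounds, reducing the latter by monotonicity to $G=\Alt(\kappa)$) matches the paper's, and your upper bound via $\tau_p$ is exactly what the paper does. Your argument for $\cn(\Alt(\kappa))\ge\kappa$ is correct and in fact different from the paper's: the paper uses copies of $\Alt(4)$ sharing three common points, so that any two generate a copy of the simple group $\Alt(5)$, which admits no epimorphism onto $\Alt(4)$; your use of pairwise disjoint copies of $S_3$ and the observation that a homomorphism $S_3\times S_3\to S_3$ restricting to the identity on each factor would force $S_3$ to be abelian is a clean alternative.

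The genuine gap is in your treatment of $s_s$ and $u_c$. For $s_s$ your reduction is wrong: finding a \emph{single} neighborhood of $1_G$ missing every $c_i^{-1}c_j$ is precisely uniform discreteness in a topology from $\Tau_s$, so it would yield $u_s(\Alt(\kappa))\ge\kappa$---and the paper explicitly leaves $u_s(G)=\kappa$ as an open problem. Moreover your proposed upgrade relies on commutator continuity, which is not available in $\Tau_s$. What actually suffices for $s_s$ is a per-point argument, and the paper gets it by a different route: it shows that $\Alt(\kappa)$ is $\sigma$-discrete in every Hausdorff shift-invariant topology. The key device is that for each fixed $\pi$ the set $\{g:\pi g\ne g\pi\}$ is open (it is the complement of the equalizer of the two continuous shifts $g\mapsto\pi g$ and $g\mapsto g\pi$); intersecting finitely many such sets, with $\pi$ a $3$-cycle meeting $\supp(f)$ in exactly one point, isolates $f$ inside the stratum $\{g:|\supp(g)|=n\}$. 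Since $|\Alt(\kappa)|=\kappa$, one stratum has size $\kappa$, giving $s_s\ge\kappa$.

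For $u_c$ your sketch is too vague to count as a plan, and the paper's actual mechanism is substantially heavier than ``propagate a neighborhood along conjugacies''. The paper proves that the \emph{restricted Zariski topology} $\Zeta'_G$---generated by subbasic sets of the form $\{x:xbx^{-1}\ne aba^{-1}\}$ and $\{x:(xcx^{-1})b(xcx^{-1})^{-1}\ne b\}$ with $b^2=c^2=1_G$---coincides with $\tau_p$ on $G$. Since every $\tau\in\Tau_c(G)$ automatically contains $\Zeta'_G$, this forces $\tau_p\subset\tau$; hence the point-stabilizer $G_0=\{g:g(0)=0\}$ is $\tau$-open and any transversal for $G_0$ is uniformly discrete of size $\kappa$. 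The hard step, which nothing in your outline approximates, is showing that each pointwise stabilizer $G_A$ (for $|A|=6$) is not only $\Zeta'_G$-closed but $\Zeta'_G$-open; the paper does this by a Baire-style contradiction using carefully chosen products of two disjoint transpositions.
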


The proof of this theorem will be divided into three Lemmas~\ref{l1}, \ref{l2}, \ref{l3}. We start our proofs with an algebraic description of the linear weight $w_l(G)$ of a group $G$.

\begin{proposition}\label{p:lw-char} For an infinite group $G$ its linear weight $w_l(G)$ is equal to the smallest cardinal $\kappa$ for which there are subgroups $G_i$, $i\in\kappa$, of index $|G/G_i|\le\kappa$ such that $\bigcap_{i\in\kappa}G_i$ coincides with the trivial subgroup $\{1_G\}$ of $G$.
\end{proposition}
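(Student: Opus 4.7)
The plan is to prove both inequalities, writing $\kappa$ for the cardinal on the right-hand side of the claimed equality.

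For $w_l(G) \le \kappa$, I take subgroups $G_i$, $i<\kappa$, of index $\le\kappa$ with $\bigcap_{i<\kappa}G_i=\{1_G\}$ and build a linear Hausdorff group topology on $G$ of weight $\le\kappa$. The naive attempt is to declare $\{G_i\}_{i<\kappa}$ a filter subbase at $1_G$. This is precisely the main obstacle: the $G_i$ need not be normal, and a filter base of non-normal open subgroups only guarantees continuity of one-sided shifts, not joint continuity of multiplication. I resolve this by closing the family under conjugation. Since each $G_i$ has at most $[G:G_i]\le\kappa$ conjugates in $G$, the enlarged family $\{gG_ig^{-1}\such g\in G,\ i<\kappa\}$ still has cardinality $\le\kappa$. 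Let $\mathcal F$ be the filter of subgroups generated by finite intersections from this enlarged family; then $\mathcal F$ has $\le\kappa$ members, each of index $\le\kappa$, with intersection $\{1_G\}$, and is closed under conjugation. Declaring $\mathcal F$ a neighborhood base at $1_G$ yields a topology $\tau$ for which the group axiom $VV^{-1}\subset U$ is automatic (each $H\in\mathcal F$ is a subgroup), while $aVa^{-1}\subset U$ follows from conjugation closure (take $V=a^{-1}Ha\in\mathcal F$ for given $U=H\in\mathcal F$), so $\tau$ is a Hausdorff linear group topology. The weight bound comes from the observation that $\{gH\such g\in G,\ H\in\mathcal F\}$ is a base, and for each $H\in\mathcal F$ there are at most $[G:H]\le\kappa$ distinct cosets, giving $w(G,\tau)\le|\mathcal F|\cdot\kappa\le\kappa$.

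For $w_l(G)\ge\kappa$, I start from any $\tau\in\Tau_l(G)$ realizing $w(G,\tau)=w_l(G)=\lambda$ and extract the required family. Fix a base $\mathcal B$ of $\tau$ with $|\mathcal B|\le\lambda$. By linearity of $\tau$, for each $B\in\mathcal B$ containing $1_G$ there exists an open subgroup $H_B\subset B$; then $\{H_B\such 1_G\in B\in\mathcal B\}$ is a neighborhood base at $1_G$ of cardinality $\le\lambda$ whose intersection equals $\{1_G\}$ by Hausdorffness of $\tau$. For the index bound, I note that the cosets of $H_B$ partition $G$ into pairwise disjoint nonempty open sets; each such coset must contain some member of $\mathcal B$, so the number of cosets is at most $|\mathcal B|\le\lambda$, giving $[G:H_B]\le\lambda$. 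Re-indexing by $\lambda$ if necessary, this family witnesses the defining property of $\kappa$ at the cardinal $\lambda$, hence $\kappa\le\lambda=w_l(G)$, completing the proof.
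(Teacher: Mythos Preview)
Your proof is correct and follows essentially the same route as the paper: in one direction you close the given family under conjugation and finite intersections to obtain a neighborhood base at $1_G$ for a linear Hausdorff group topology of weight $\le\kappa$, and in the other you extract open subgroups inside the basic neighborhoods of $1_G$, invoking Hausdorffness for the trivial intersection and bounding each index by counting basic sets inside the pairwise disjoint cosets. The only cosmetic difference is that the paper cites \cite[1.3.2]{AT} for the verification that the resulting topology is a group topology, whereas you check the axioms $VV^{-1}\subset U$ and $aVa^{-1}\subset U$ by hand.
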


\begin{proof}  Let $w_l'(G)$ denote the smallest cardinal $\kappa$ such that $\{1_G\}=\bigcap_{i\in\kappa}G_i$ for some subgroups $G_i$ of index $|G/G_i|\le\kappa$ in $G$. We need to prove that $w_l(G)=w'_l(G)$.

To prove that $w_l'(G)\le w_l(G)$, use the definition of the linear weight $w_l(G)$ and find a linear group topology $\tau$ of weight $\kappa=w_l(G)$ on $G$. Let $\mathcal B\subset\tau$ be a base of the topology $\tau$ of cardinality $|\mathcal B|=\kappa$. Let $\mathcal B_1=\{B\in\mathcal B:1_G\in B\}$ be the neighborhood base at the unit $1_G$ of the group $G$. Since $|\mathcal B_1|\le|\mathcal B|\le\kappa$, the set $\mathcal B_1$ can be enumerated as $\mathcal B_1=\{B_i\}_{i<\kappa}$. Since the topology is linear, each set $B_i\in\mathcal B_1$ contains an open subgroup $H_i$ of $G$. Taking into account that the family $\{xH_i:x\in G\}$ is disjoint and each coset $xH_i$, $x\in G$, contains some basic set $U\in\mathcal B$, we conclude that $|\{xH_i:x\in G\}|\le|\mathcal B|=\kappa$ and hence the subgroup $H_i$ has index $\le\kappa$ in $G$. The Hausdorff property of the topology $\tau$ guarantees that $\{1_G\}=\bigcap\mathcal B_1=\bigcap_{i<\kappa}H_i$. So, the family $\{H_i\}_{i<\kappa}$ witnesses that $w'_l(G)\le \kappa=w_l(G)$.

Now we check that $w_l(G)\le w_l'(G)$. By the definition of the cardinal $\kappa=w_l'(G)$, there exists a family $\mathcal H$ of subgroups of index $\le\kappa$ in $G$ such that $|\mathcal H|\le\kappa$ and $\{1_G\}=\bigcap\mathcal H$. Observe that for any subgroup $H\in\mathcal H$, any $x\in G$, and any $y\in xH$, we get $xHx^{-1}=yHy^{-1}$, which implies that the family $\{xHx^{-1}:x\in G\}$ has cardinality $\le|G/H|\le\kappa$. Then the family $$\U=\big\{\bigcap_{i=1}^nx_iH_ix_i^{-1}:H_1,\dots,H_n\in\HH,\;x_1,\dots,x_n\in G\big\}$$ has cardinality $|\U|\le\kappa$ and consists of subgroups of index $\le\kappa$ in $G$.
Now consider the  topology $\tau$ on $G$ consisting of sets $U\subset G$ such that for every point $x\in U$ there is a subgroup $H\in\U$ such that $xH\subset U$. Using Theorem~1.3.2 in \cite{AT}, it can be shown that the topology $\tau$ turns $G$ into a linear topological group of weight $\le \kappa$. Then $w_l(G)\le\kappa=w_l'(G)$.
\end{proof}

\begin{proposition}\label{p:wl<wgw} Each infinite group $G$ has $w_g(G)\le w_l(G)\le w_g(G)^\w$.
\end{proposition}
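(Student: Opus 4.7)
The inequality $w_g(G)\le w_l(G)$ is immediate from the inclusion $\Tau_l(G)\subset\Tau_g(G)$ noted in the excerpt, since minimizing the weight over the smaller family produces the larger value. For the reverse inequality, my plan is to apply the algebraic characterization of $w_l$ from Proposition~\ref{p:lw-char}: starting from a Hausdorff group topology $\tau$ on $G$ with $w(G,\tau)=w_g(G)=\kappa$, I shall exhibit a family of at most $\kappa^\w$ subgroups of $G$, each of index at most $\kappa^\w$, whose intersection is $\{1_G\}$.

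Fix a base $\{V_\alpha:\alpha<\kappa\}$ of symmetric $\tau$-open neighborhoods of $1_G$. For each sequence $\bar\alpha=(\alpha_n)_{n<\w}\in\kappa^\w$ satisfying $V_{\alpha_{n+1}}^3\subset V_{\alpha_n}$ for every $n$, set $H_{\bar\alpha}=\bigcap_{n<\w}V_{\alpha_n}$. The chain condition makes each $H_{\bar\alpha}$ a subgroup of $G$, there are at most $\kappa^\w$ such sequences, and their overall intersection is $\{1_G\}$: given $g\ne 1_G$, the Hausdorff property supplies a basic $V_{\alpha_0}$ with $g\notin V_{\alpha_0}$, and iterated use of the continuity of multiplication and inversion extends $\alpha_0$ to an admissible $\bar\alpha$, so $g\notin V_{\alpha_0}\supseteq H_{\bar\alpha}$.

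The main task is to bound the index $|G/H_{\bar\alpha}|$ by $\kappa^\w$. I would invoke the Birkhoff--Kakutani construction applied to the chain $(V_{\alpha_n})_n$ to obtain a left-invariant pseudometric $d_{\bar\alpha}$ on $G$, continuous with respect to $\tau$, whose kernel $\{x:d_{\bar\alpha}(1_G,x)=0\}$ coincides with $H_{\bar\alpha}$; then $(G/H_{\bar\alpha},d_{\bar\alpha})$ is a metric space whose topology is the quotient of the $d_{\bar\alpha}$-topology on $G$. Since that topology is coarser than $\tau$, any $\tau$-dense subset of $G$ of size $\kappa$ (which exists because density is bounded by weight) projects to a dense subset of the metric quotient. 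The standard inequality $|X|\le\mathrm{dens}(X)^\w$ for metric spaces then gives $|G/H_{\bar\alpha}|\le\kappa^\w$, and Proposition~\ref{p:lw-char} yields $w_l(G)\le\kappa^\w=w_g(G)^\w$.

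The main obstacle is this index bound: a countable intersection of open neighborhoods can a priori have very large index in a topological group of weight $\kappa$, and the $\w$th power in the statement is precisely the overhead introduced by the Birkhoff--Kakutani detour through a metric quotient, where density controls cardinality up to a countable exponent.
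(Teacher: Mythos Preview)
Your argument is correct and follows essentially the same route as the paper's: build subgroups as countable intersections of shrinking symmetric neighborhoods of $1_G$, bound each index by $\kappa^\w$, and invoke Proposition~\ref{p:lw-char}. The differences are only in packaging. The paper indexes by single basic neighborhoods $U_\alpha$, $\alpha<\kappa$, choosing for each one a chain $(U_{\alpha,n})_{n\in\w}$ with $U_{\alpha,n}^2\subset U_{\alpha,n-1}$ and setting $H_\alpha=\bigcap_n U_{\alpha,n}$; so it uses only $\kappa$ subgroups rather than $\kappa^\w$, and triviality of the intersection is immediate from $\bigcap_\alpha H_\alpha\subset\bigcap_\alpha U_\alpha=\{1_G\}$. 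For the index bound the paper gives your Birkhoff--Kakutani argument by hand: for each $n$ it picks a maximal $U_{\alpha,n}$-separated (hence discrete, hence of size $\le\kappa$) set $K_n$, sends each $x\in G$ to a nearby point $f_n(x)\in K_n$, and checks that $(f_n)_{n\in\IN}:G\to\prod_n K_n$ separates $H_\alpha$-cosets --- this is exactly the metric-quotient inequality $|X|\le\mathrm{dens}(X)^\w$ unwrapped.
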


\begin{proof} The inequality $w_g(G)\le w_l(G)$ is trivial. To prove that $w_l(G)\le w_g(G)^\w$, fix a Hausdorff group topology $\tau$ on $G$ such that $w(G,\tau)=w_g(G)$. Let $\kappa=w_g(G)=w(G,\tau)$ and fix a neighborhood base $\{U_\alpha\}_{\alpha\in \kappa}\subset\tau$ at the unit $1_G$ of the group $G$. For every $\alpha\in\kappa$ put $U_{\alpha,0}=U_{\alpha}$ and for every $n\in\IN$ choose a symmetric neighborhood $U_{\alpha,n}=U_{\alpha,n}^{-1}\in\tau$ of the unit $1_G$ such that $U_{\alpha,n}U_{\alpha,n}\subset U_{\alpha,n-1}$. It is easy to see that the intersection $H_{\alpha}=\bigcap_{n\in\IN}U_{\alpha,n}$ is a subgroup of $G$. We claim that this subgroup has index $|G/H_\alpha|\le \kappa^\w$ in $G$. Let $q:G\to G/H_{\alpha}:=\{xH_\alpha:x\in G\}$, $q:x\mapsto xH_\alpha$, be the quotient map and $s:G/H_{\alpha}\to G$ be any function such that $q\circ s=\id$. For every $n\in\IN$ choose a maximal subset $K_n\subset G$ such that $xU_{\alpha,n}\cap yU_{\alpha,n}=\emptyset$ for any distinct points $x,y\in K_n$, and observe that $K_n$ is a discrete subspace of $(G,\tau)$, which implies that $|K_n|\le w(G,\tau)=w_g(G)=\kappa$. By the maximality of $K_n$, for every $x\in G$ there is a point $f_n(x)\in K_n$ such that $xU_{\alpha,n}\cap f_n(x)U_{\alpha,n}\ne\emptyset$ and hence $x\in f_n(x)U_{\alpha,n}U_{\alpha,n}^{-1}\subset f_n(x)U_{\alpha,n-1}$. The functions $f_n$, $n\in\IN$, form the function $f=(f_n)_{n=1}^\infty:G\to\prod_{n=1}^\infty K_n$. We claim that the composition $f\circ s:G/H_{\alpha}\to \prod_{n\in\IN}K_n$ is injective. Given any cosets $X,Y\in G/H_\alpha$ with $f\circ s(X)=f\circ s(Y)$, consider the points $x=s(X)$ and $y=s(Y)$. The equality $f(x)=f\circ s(X)=f\circ s(Y)=f(y)$ implies that $f_n(x)=f_n(y)$ for all $n\in\IN$ and hence $x,y\in f_n(x)U_{\alpha,n-1}$, which implies that $x^{-1}y\in U^{-1}_{\alpha,n-1}U_{\alpha,n-1}\subset U_{\alpha,n-2}$ for every $n\ge 2$. Then $x^{-1}y\in\bigcap_{n=2}^\infty U_{\alpha,n-2}=H_\alpha$ and hence $X=xH_\alpha=yH_\alpha=Y$, witnessing that the function $f\circ s:G/V_{\alpha}\to\prod_{n\in\IN}K_\alpha$ is injective and hence
$|G/V_{\alpha}|\le\big|\prod_{n\in\IN}K_\alpha\big|\le\kappa^\w$.
Since $1_G\in\bigcap_{\alpha\in\kappa}H_\alpha\subset\bigcap_{\alpha\in\kappa}U_\alpha=\{1_G\}$, we can apply Proposition~\ref{p:lw-char} and conclude that $w_l(G)\le\kappa^\w=w_g(G)^\w$.
\end{proof}

Modifying the proofs of Propositions~\ref{p:lw-char} and \ref{p:wl<wgw} we can prove the following two facts about the cardinal group invariant $u_l$.

\begin{proposition}\label{p:ul-char} For an infinite group $G$ its linear uniform spread $u_l(G)$ is equal to the smallest cardinal $\kappa$ such that $\{1_G\}=\bigcap\mathcal H$ for some family $\mathcal H$ of subgroups of index $\le \kappa$ in $G$.
\end{proposition}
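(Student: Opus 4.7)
The plan is to run the proof of Proposition~\ref{p:lw-char} essentially verbatim, but to exploit the fact that the uniform spread $u(G,\tau)$ depends only on the structure of individual neighborhoods of $1_G$ (via the definition of uniform discreteness), and \emph{not} on the size of a neighborhood base. This is why, in contrast to the characterization of $w_l$, no cardinality bound is imposed on the witnessing family $\mathcal H$. Write $u_l'(G)$ for the right-hand side; I would establish both inequalities.

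For $u_l'(G)\le u_l(G)$, I would fix a linear Hausdorff group topology $\tau$ on $G$ realizing $u(G,\tau)=u_l(G)=:\kappa$, and let $\mathcal H$ be the family of \emph{all} $\tau$-open subgroups of $G$. Since $\tau$ is linear, $\mathcal H$ contains a neighborhood base at $1_G$, so the Hausdorff property gives $\bigcap\mathcal H=\{1_G\}$. For each $H\in\mathcal H$, any transversal of the left cosets of $H$ is uniformly discrete with $H$ as the witnessing neighborhood of $1_G$ (two distinct cosets of a subgroup are disjoint), hence $|G/H|\le u(G,\tau)=\kappa$, as required.

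For $u_l(G)\le u_l'(G)$, I would take $\mathcal H$ witnessing $u_l'(G)=\kappa$ and, following Proposition~\ref{p:lw-char}, form
$$\U=\Big\{\bigcap_{i=1}^n x_iH_ix_i^{-1}:H_i\in\mathcal H,\ x_i\in G,\ n\in\IN\Big\},$$
noting that each $V\in\U$ is a subgroup of index $\le\kappa$ (conjugation preserves index, and a finite intersection of subgroups of index $\le\kappa$ has index $\le\kappa^n=\kappa$). By Theorem~1.3.2 of \cite{AT}, the topology $\tau$ on $G$ generated by cosets of members of $\U$ turns $G$ into a linear topological group, and $\bigcap\mathcal H=\{1_G\}$ makes $\tau$ Hausdorff. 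To bound $u(G,\tau)$: given a uniformly discrete $D\subset G$ with witnessing open neighborhood $U\ni 1_G$, pick $V\in\U$ with $V\subset U$; distinct $x,y\in D$ satisfy $y\notin xU\supset xV$, so they lie in distinct (hence disjoint) cosets of $V$, whence $|D|\le|G/V|\le\kappa$.

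The only subtle point, rather than a serious obstacle, is to resist reproducing the cardinality bookkeeping of Proposition~\ref{p:lw-char}: the family $\U$ may be very large, but we do not need to bound $|\U|$ because we are not measuring weight here. All that matters is that \emph{each} basic subgroup $V$ has index $\le\kappa$, which by the coset argument controls the size of every uniformly discrete set.
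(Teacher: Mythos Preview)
Your proof is correct and is precisely the modification of the proof of Proposition~\ref{p:lw-char} that the paper has in mind (the paper does not spell out a proof, merely saying that Propositions~\ref{p:ul-char} and~\ref{p:ul<ugw} follow by modifying the proofs of Propositions~\ref{p:lw-char} and~\ref{p:wl<wgw}). Your explicit identification of the key difference --- that for $u_l$ one need not bound $|\mathcal U|$, only the index of each $V\in\mathcal U$, because a single basic subgroup already caps the size of any uniformly discrete set --- is exactly the point, and your transversal argument for the forward inequality is the natural one.
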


\begin{proposition}\label{p:ul<ugw} Each infinite group $G$ has $u_g(G)\le u_l(G)\le u_g(G)^\w$.
\end{proposition}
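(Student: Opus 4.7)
The plan is to run the same strategy as in Proposition~\ref{p:wl<wgw}, but replace the use of Proposition~\ref{p:lw-char} by the characterization from Proposition~\ref{p:ul-char}, and exploit the crucial difference: for $u_l$, the witnessing family $\mathcal H$ of finite-index subgroups is not required to be small, only its members must have small index. This flexibility allows the argument to go through even when $w(G,\tau)$ is much larger than $u(G,\tau)$.

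The inequality $u_g(G)\le u_l(G)$ is immediate from $\Tau_l(G)\subset\Tau_g(G)$. For the nontrivial inequality, fix a Hausdorff group topology $\tau$ on $G$ with $u(G,\tau)=u_g(G)=:\kappa$. For \emph{every} open neighborhood $U$ of $1_G$, choose symmetric open neighborhoods $U_{U,n}=U_{U,n}^{-1}$ of $1_G$ with $U_{U,0}=U$ and $U_{U,n}U_{U,n}\subset U_{U,n-1}$, and set $H_U=\bigcap_{n\in\IN}U_{U,n}$. As in Proposition~\ref{p:wl<wgw}, $H_U$ is a subgroup of $G$. The Hausdorff property yields $\bigcap_{U}H_U\subset\bigcap_{U}U=\{1_G\}$ where $U$ ranges over all open neighborhoods of $1_G$, so the family $\mathcal H=\{H_U:U\in\tau,\;1_G\in U\}$ has trivial intersection.

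The key new point is bounding the index $|G/H_U|$ by $\kappa^\omega$ \emph{without} a small base. For each $n$, pick a maximal set $K_{U,n}\subset G$ with $xU_{U,n}\cap yU_{U,n}=\emptyset$ for distinct $x,y\in K_{U,n}$. The disjointness condition translates to $x^{-1}y\notin U_{U,n}U_{U,n}^{-1}=U_{U,n}U_{U,n}$, i.e.\ $y\notin x\cdot(U_{U,n}U_{U,n})$, so $K_{U,n}$ is uniformly discrete with respect to the open neighborhood $U_{U,n}U_{U,n}\subset U_{U,n-1}$ of $1_G$. Consequently $|K_{U,n}|\le u(G,\tau)=\kappa$. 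The remainder of the index calculation is identical to Proposition~\ref{p:wl<wgw}: by maximality of $K_{U,n}$, each $x\in G$ admits $f_n(x)\in K_{U,n}$ with $x\in f_n(x)U_{U,n}U_{U,n}\subset f_n(x)U_{U,n-1}$, and the same argument shows the composition of the quotient section $G/H_U\to G$ with $f=(f_n):G\to\prod_n K_{U,n}$ is injective. Hence $|G/H_U|\le\kappa^\omega$.

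Applying Proposition~\ref{p:ul-char} to the family $\mathcal H$ (whose cardinality is irrelevant) yields $u_l(G)\le\kappa^\omega=u_g(G)^\omega$. The main subtlety, and the place where the argument really differs from the weight version, is recognizing that the packing set $K_{U,n}$ is automatically uniformly discrete — this is what trades the weight bound for the uniform spread bound; once this is noticed, the rest is a direct transcription of the proof of Proposition~\ref{p:wl<wgw}.
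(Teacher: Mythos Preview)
Your argument is correct and is precisely the modification the paper has in mind: use Proposition~\ref{p:ul-char} in place of Proposition~\ref{p:lw-char} (so the family $\mathcal H$ may be large), and observe that the maximal packing sets $K_{U,n}$ are uniformly discrete rather than merely discrete, which replaces the bound $|K_n|\le w(G,\tau)$ by $|K_{U,n}|\le u(G,\tau)$. This is exactly the approach the paper indicates when it says the result follows by modifying the proofs of Propositions~\ref{p:lw-char} and~\ref{p:wl<wgw}.
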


Both inequalities in this Propositions~\ref{p:wl<wgw} and \ref{p:ul<ugw} can be strict.

\begin{example} The discrete group $G=\mathbb Z$ of integers has $u_g(G)=u_l(G)=w_g(G)=w_l(G)=\w<\w^\w$.
\end{example}

\begin{example} Let $M$ be the unit interval or the unit circle. The homeomorphism group $G$ of $M$ has
$u_g(G)=w_g(G)=\w$ and $u_l(G)=w_l(G)=\mathfrak c$.
\end{example}

\begin{proof} The inequality $w_g(G)=\w$ follows from the fact that the compact-open topology on the homeomorphism group $G$ is metrizable and separable. The equality $u_l(G)=\mathfrak c$ follows from Proposition~\ref{p:ul-char} and Theorem 6 of \cite{RS} saying that the subgroup $G_+\subset G$ of orientation preserving homeomorphisms of $M$ is the only subgroup of index $<\mathfrak c$ in $G$.
\end{proof}

The following proposition gives an upper bound on the weak compatibility number $\cn$.

\begin{proposition}\label{p:wc<lw} Each infinite group $G$ has $\cn(G)\le w_l(G)$.
\end{proposition}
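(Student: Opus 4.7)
Let $\kappa=w_l(G)$, and use Proposition~\ref{p:lw-char} to select subgroups $G_\alpha\le G$, $\alpha<\kappa$, each of index $|G/G_\alpha|\le\kappa$, with $\bigcap_{\alpha<\kappa}G_\alpha=\{1_G\}$. The plan is to verify the weak $\kappa^+$-compatibility condition by a double pigeonhole argument culminating in a permutation representation of $\langle F_i\cup F_j\rangle$ on a finite coset set.

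Fix a finite group $F$ and isomorphisms $f_i:F\to F_i$ onto finite subgroups $F_i\le G$ for $i<\kappa^+$. Since each $F_i$ is finite and $\bigcap_\alpha G_\alpha=\{1_G\}$, we can pick a finite $T_i\subset\kappa$ with $F_i\cap\bigcap_{\alpha\in T_i}G_\alpha=\{1_G\}$. The family of finite subsets of $\kappa$ has cardinality $\kappa$, so by regularity of $\kappa^+$ the first pigeonhole yields a subfamily of size $\kappa^+$ on which $T_i$ equals a fixed $T$; set $H=\bigcap_{\alpha\in T}G_\alpha$, a subgroup of $G$ with $|G/H|\le\kappa$ and $F_i\cap H=\{1_G\}$ for every $i$ in the subfamily. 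A second pigeonhole on the map $i\mapsto(f_i(x)H)_{x\in F}\in(G/H)^F$ --- a set of size at most $\kappa^{|F|}=\kappa$ --- cuts this subfamily to size $\kappa^+$ on which the map is constant; fix representatives $s_x\in G$ with $s_xH=f_i(x)H$ for all $i$ in the subfamily and all $x\in F$.

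The crux is to introduce the finite set
$$S:=\{s_xH:x\in F\}\subset G/H,$$
which has exactly $|F|$ elements because $F_i\cap H=\{1_G\}$ makes the cosets $f_i(x)H$ distinct, and to observe that each $F_i$ from the subfamily preserves $S$ setwise by left multiplication:
$$f_i(x)\cdot s_yH=f_i(x)f_i(y)H=f_i(xy)H=s_{xy}H\quad(x,y\in F).$$
For any $i<j$ in the subfamily, the subgroup $\langle F_i\cup F_j\rangle$ therefore acts on $S$, inducing a homomorphism $\pi:\langle F_i\cup F_j\rangle\to\Sym(S)$. Transporting $S$ to $F$ via $s_xH\leftrightarrow x$, the computation above identifies $\pi\circ f_i$ and $\pi\circ f_j$ both with the left-regular embedding $\iota:F\hookrightarrow\Sym(F)$, $\iota(x)(y)=xy$. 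Consequently $\pi(\langle F_i\cup F_j\rangle)=\iota(F)$, and $\phi:=\iota^{-1}\circ\pi$ is the required homomorphism with $\phi\circ f_i=\phi\circ f_j=\id_F$.

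The chief obstacle the argument must handle is that $H$ is not normal in $G$: one cannot simply take $\phi$ to be the quotient by $H\cap\langle F_i\cup F_j\rangle$, and replacing $H$ by its core in $G$ wrecks the second pigeonhole, since the core's index may be as large as $2^\kappa$ --- too big for a pigeonhole against $\kappa^+$ many $f_i$. Passing to the action on the finite orbit $S$ is exactly what sidesteps this by routing $\phi$ through the fixed finite group $\Sym(S)\cong\Sym(F)$, without demanding any normal subgroup of $G$.
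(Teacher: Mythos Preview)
Your proof is correct; the double-pigeonhole skeleton matches the paper's, but the endgame differs in a way worth noting. The paper, after finding $H$ with $F_i\cap H=\{1_G\}$, first replaces $H$ by $\bigcap_{x\in F_i}xHx^{-1}$ so that $H$ is normalized by $F_i$ (this is why the paper closes the family $\mathcal H$ under conjugates as well as finite intersections). Once $i<j$ are chosen with $q\circ f_i=q\circ f_j$, the subgroup $H$ is then normal in $L=F_{ij}H$, and $\phi$ is obtained via the genuine quotient group $L/H\cong F$. Your route bypasses normalization entirely: you observe directly that the finite coset set $S=\{f_i(x)H:x\in F\}$ is invariant under left multiplication by $F_i$ and by $F_j$, hence by $\langle F_i\cup F_j\rangle$, and you read $\phi$ off the permutation representation on $S$. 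This is a clean alternative: it saves the conjugation-closure of $\mathcal H$ and the verification that $F_{ij}H$ is a subgroup with $H$ normal in it, at the modest cost of routing through $\Sym(S)$ and checking that the image lands in $\iota(F)$. Both arguments exploit the same core fact --- that after the two pigeonholes the left actions of $F_i$ and $F_j$ on $G/H$ agree on a common finite orbit --- but yours extracts $\phi$ from that fact a little more directly.
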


\begin{proof} It suffices to prove that $G$ satisfies the weak $\kappa^+$-compatibility condition for $\kappa=w_l(G)$. By Proposition~\ref{p:lw-char}, there is a family $\mathcal H$ of subgroups of index $\le\kappa$ in $G$ such that $|\mathcal H|\le\kappa$ and $\{1_G\}=\bigcap\HH$. Replacing $\HH$ by a larger family, we can assume that for any subgroups $H_1,\dots,H_n\in\HH$ and any points $x_1,\dots,x_n\in G$ the subgroup $\bigcap_{i=1}^nx_iH_ix_i^{-1}$ belongs to the family $\HH$.

To show that the group $G$ satisfies the weak $\kappa^+$-compatibility condition, fix a finite group $F$ and  isomorphisms $f_i:F\to F_i$, $i<\kappa^+$, onto finite subgroups $F_i\subset G$. Since the family $\mathcal H$ is closed under finite intersections, for every
 $i<\kappa^+$ we can choose a subgroup $H_i\in\mathcal H$ such that $F_i\cap H_i=\{1_G\}$. Replacing $H_i$ by the subgroup $\bigcap_{x\in F_i}xH_ix^{-1}$, we can assume that $xH_ix^{-1}=H_i$ for all points $x\in F_i$. Since $|\mathcal H|\le\kappa<\kappa^+$, for some subgroup $H\in\HH$ the set $I=\{i\in\kappa^+:H_i=H\}$ has cardinality $|I|=\kappa^+$. Consider the family of left cosets $G/H=\{xH:x\in G\}$ and the quotient map $q:G\to G/H$, $q:x\mapsto xH$. Since $|(G/H)^F|\le\kappa<\kappa^+$, there are two indices $i<j$ in $I$ such that $q\circ f_i=q\circ f_j$. Now consider the subgroup $F_{ij}=\langle F_i\cup F_j\rangle$ generated by the set $F_i\cup F_j$. Taking into account that $xHx^{-1}=H$ for all $x\in F_i\cup F_j$, we conclude that $xHx^{-1}=H$ for all $x\in F_{ij}$, which implies that $L=F_{ij}\cdot H=\{xy:x\in F_{ij},\;y\in H\}$ is a subgroup of $G$ and $H$ is a normal subgroup in $L$. Consequently, the subspace $L/H=\{xH:x\in L\}$ has the structure of a group and the restriction $q|L:L\to L/H$ is a group homomorhism. The equality $q\circ f_i=q\circ f_j$ implies that $L/H=q(F_{ij})=q(F_i)=q(F_j)$. Since $H\cap F_i=\{1_G\}$, the restriction $\varphi=q|F_i:F_i\to L/H$, being injective and surjective, is an isomorphism.
Then $\psi=f_i^{-1}\circ\varphi^{-1}:L/H\to F$ is an isomorphism too. It can be shown that the homomorphism $\phi=\psi\circ q|F_{ij}:F_{ij}\to F$ has the required property: $\phi\circ f_j=\phi\circ f_i=\id_F$.
\end{proof}

\begin{question} Is $\cn(G)\le w_g(G)$ for any group $G$?
\end{question}

Now we are able to prove two equalities of Theorem~\ref{main}.

\begin{lemma}\label{l1} For every infinite cardinal $\kappa$ and group $G$ with $\Alt(\kappa)\subset G\subset \Sym(\kappa)$ we get $$ \kappa=\cn(G)=w_l(G).$$
\end{lemma}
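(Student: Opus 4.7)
The two equalities reduce, via the inequality $\cn(G)\le w_l(G)$ from Proposition~\ref{p:wc<lw}, to the two separate bounds $w_l(G)\le\kappa$ and $\cn(G)\ge\kappa$; the chain $\kappa\le\cn(G)\le w_l(G)\le\kappa$ then closes the argument.

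For the upper bound $w_l(G)\le\kappa$, I would apply the algebraic description in Proposition~\ref{p:lw-char} to the family of point stabilizers $G_x:=\{g\in G:g(x)=x\}$ for $x\in\kappa$. Because $\Alt(\kappa)\subset G$ acts transitively on $\kappa$ (any two points can be swapped by a $3$-cycle or a double transposition), each $G_x$ has index exactly $\kappa$ in $G$, and $\bigcap_{x\in\kappa}G_x=\{1_G\}$ since the only permutation of $\kappa$ fixing every element is the identity. This yields $\kappa$-many subgroups of index $\le\kappa$ with trivial intersection.

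For the lower bound $\cn(G)\ge\kappa$, I would fix an arbitrary infinite cardinal $\mu<\kappa$ and exhibit a witness showing that $G$ fails the weak $\mu^+$-compatibility condition. Take $F=\mathbb Z/3\mathbb Z$. Fix a point $c\in\kappa$, and, using $\mu^+\le\kappa$, choose pairwise disjoint two-element sets $\{a_i,b_i\}\subset\kappa\setminus\{c\}$ for $i<\mu^+$. Put $c_i=(a_i\;b_i\;c)\in\Alt(\kappa)\subset G$, and let $f_i:F\to F_i:=\langle c_i\rangle$ send a fixed generator of $F$ to $c_i$. For any $i<j<\mu^+$, the $3$-cycles $c_i,c_j$ meet in exactly the point $c$, so $\langle F_i\cup F_j\rangle=\langle c_i,c_j\rangle$ acts faithfully on the five-element set $\{a_i,b_i,c,a_j,b_j\}$ and contains both the $3$-cycle $c_i$ and the $5$-cycle $c_ic_j$; a short order argument inside $\Alt_5$ (order divisible by $15$, sitting inside the simple group of order $60$) forces $\langle c_i,c_j\rangle\cong\Alt_5$. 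Since $\Alt_5$ is simple non-abelian, every homomorphism $\phi:\langle c_i,c_j\rangle\to F$ is trivial, which is incompatible with the required $\phi\circ f_i=\id_F$ (the latter forces $\phi(c_i)\ne 0$). Hence no pair $i<j$ admits a compatible $\phi$, so weak $\mu^+$-compatibility fails. As this holds for every infinite $\mu<\kappa$, we conclude $\cn(G)\ge\kappa$.

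The main conceptual point is identifying the right finite configuration: the simplicity of $\Alt_5$ is the obstruction that kills every homomorphism into the abelian target $\mathbb Z/3\mathbb Z$, and the combinatorial room inside $\kappa$ lets us realize $\mu^+$-many ``colliding'' pairs of $3$-cycles sharing a common fixed point. The stabilizer computation for the upper bound is purely routine; the real content lies in producing the simple-quotient-free witness at the finite level, and in observing that the definition of weak compatibility only demands that \emph{some} pair $i<j$ work, so the obstruction must hold \emph{uniformly} across all pairs — which $\Alt_5$'s simplicity guarantees.
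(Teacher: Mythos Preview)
Your proof is correct and follows essentially the same strategy as the paper: point stabilizers give the upper bound $w_l(G)\le\kappa$ via Proposition~\ref{p:lw-char}, and the simplicity of $\Alt(5)$ blocks the weak $\mu^+$-compatibility condition for every $\mu<\kappa$, yielding $\cn(G)\ge\kappa$.

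The only difference is cosmetic, in the choice of witness for the lower bound. The paper takes $F=\Alt(4)$ and uses the subgroups $\Alt(\{0,1,2,i\})$ for $3\le i<\mu^+$; any two of these generate a copy of $\Alt(5)$, and since $\Alt(5)$ is simple while $\Alt(4)$ is not, there is no surjective homomorphism $\Alt(5)\to\Alt(4)$. You instead take $F=\mathbb Z/3\mathbb Z$ and use $3$-cycles sharing a single common point; again any two generate $\Alt(5)$, and now simplicity (or perfectness) of $\Alt(5)$ kills every homomorphism into the abelian target. Your version has the mild advantage that the ``no homomorphism'' step is immediate (any map from a perfect group to an abelian group is trivial), whereas the paper's version has the mild advantage that the generation claim $\langle\Alt(K_i)\cup\Alt(K_j)\rangle=\Alt(K_i\cup K_j)$ is standard and needs no computation. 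Either way the content is the same: a $\mu^+$-indexed family of finite subgroups, any two of which generate a simple group admitting no homomorphism onto $F$.
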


\begin{proof} By Proposition~\ref{p:wc<lw}, $\cn(G)\le w_l(G)$. Since the cardinal group invariants $\cn$ and $w_l$ are monotone, it suffices to prove that $w_l(\Sym(\kappa))\le\kappa$ and $\cn(\Alt(\kappa))\ge\kappa$.

To see that $w_l(\Sym(\kappa))\le\kappa$, for every $i\in \kappa$ consider the subgroup $G_i=\{f\in\Sym(\kappa):f(i)=i\}$ and observe that the index of this subgroup in $\Sym(\kappa)$ is equal to $\kappa$. Taking into account that
$\bigcap_{i\in\kappa}G_i=\{\id\}$ is the trivial subgroup of $\Sym(\kappa)$,  and applying Proposition~\ref{p:lw-char}, we conclude that $w_l(\Sym(\kappa))\le\kappa$.

To see that $\cn(\Alt(\kappa))\ge\kappa$, it suffices to check that for every $\lambda<\kappa$ the group $\Alt(\kappa)$ does not satisfy the weak $\lambda^+$-compatibility condition.
For every ordinal $3\le i<\lambda^+\le\kappa$ consider the 4-element subset $K_i=\{0,1,2,i\}$ and its alternating group $\Alt(K_i)\subset \Alt(\kappa)$. Fix an isomorphism $f_i:\Alt(4)\to \Alt(K_i)$ and observe that for any $3\le i<j<\lambda^+$ the subgroup $\langle \Alt(K_i)\cup\Alt(K_j)\rangle$ is equal to $\Alt(K_i\cup K_j)$ and is isomorphic to the alternating group $\Alt(5)$.
Since $\Alt(5)$ is a simple group and $\Alt(4)$ is not
simple there does not exist
a surjective homomorphism from $\langle \Alt(H_i)\cup \Alt(H_j) \rangle$ to $\Alt(4)$, which implies that the group $\Alt(\kappa)$ does not satisfy the weak $\lambda^+$-compatibility condition.
\end{proof}

 We say that a topological space $X$ is {\em $\sigma$-discrete} if $X$ can be written as a countable union
of discrete subspaces. By \cite{BG} or \cite[6.1]{BGP}, for every cardinal $\kappa$ the group $\Sym_\fin(\kappa)$ is $\sigma$-discrete in each shift-invariant Hausdorff topology on $\Sym_\fin(\kappa)$. The same fact is true for the alternating group  $\Alt(\kappa)$.

\begin{theorem}\label{sigmad} The alternating group $\Alt(\kappa)$ on a cardinal $\kappa$ is $\sigma$-discrete in any Hausdorff shift-invariant topology $\tau$ on $\Alt(\kappa)$.
\end{theorem}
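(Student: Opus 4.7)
The plan is to reduce the statement to the cited fact (\cite{BG}, \cite[6.1]{BGP}) that $\Sym_\fin(\kappa)$ is $\sigma$-discrete in every Hausdorff shift-invariant topology. Since $\Alt(\kappa)$ is a subgroup of index $2$ in $\Sym_\fin(\kappa)$, the nontrivial coset can be identified with $\Alt(\kappa)$ via a fixed transposition, allowing $\tau$ to be canonically extended from $\Alt(\kappa)$ to a Hausdorff shift-invariant topology $\tau^*$ on $\Sym_\fin(\kappa)$ in such a way that the subspace topology induced by $\tau^*$ on $\Alt(\kappa)$ is $\tau$ itself. Since a subspace of a $\sigma$-discrete space is $\sigma$-discrete, the conclusion then follows.

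For finite $\kappa$ the group $\Alt(\kappa)$ is finite and hence discrete in any Hausdorff topology, so I assume $\kappa$ is infinite and fix a transposition $\pi\in\Sym_\fin(\kappa)\setminus\Alt(\kappa)$; then $\Sym_\fin(\kappa)=\Alt(\kappa)\sqcup\pi\Alt(\kappa)$. Declare $U\subseteq\Sym_\fin(\kappa)$ to be $\tau^*$-open iff $U\cap\Alt(\kappa)\in\tau$ and $\pi^{-1}(U\cap\pi\Alt(\kappa))\in\tau$. Each coset is then clopen in $\tau^*$, each carries a homeomorphic copy of $(\Alt(\kappa),\tau)$, and the subspace topology on $\Alt(\kappa)$ inherited from $\tau^*$ is exactly $\tau$.

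Next I would verify that $\tau^*$ is Hausdorff and shift-invariant. Hausdorffness is immediate because the two cosets are clopen and $\tau$ is Hausdorff on each of them. For shift-invariance, it suffices to check that left and right multiplications by elements of $\Alt(\kappa)$ and by $\pi$ are $\tau^*$-homeomorphisms, since every element of $\Sym_\fin(\kappa)$ is either in $\Alt(\kappa)$ or of the form $\pi\alpha$ with $\alpha\in\Alt(\kappa)$. For $\alpha\in\Alt(\kappa)$, left multiplication by $\alpha$ is a $\tau$-homeomorphism on $\Alt(\kappa)$ by hypothesis, while on $\pi\Alt(\kappa)$ it sends $\pi\beta$ to $\pi\cdot(\pi^{-1}\alpha\pi)\beta$; because $\Alt(\kappa)$ is normal in $\Sym_\fin(\kappa)$, the conjugate $\pi^{-1}\alpha\pi$ is even, so under the identification $\pi\Alt(\kappa)\cong\Alt(\kappa)$ via $\pi\cdot(-)$ this map is again a $\tau$-homeomorphism. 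Left multiplication by $\pi$ swaps the two cosets as the identity under this identification. Right multiplications are handled symmetrically.

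Having established that $\tau^*$ is a Hausdorff shift-invariant topology on $\Sym_\fin(\kappa)$, I apply the cited result to obtain a decomposition $\Sym_\fin(\kappa)=\bigcup_{n\in\omega}D_n$ with each $D_n$ discrete in $\tau^*$. Intersecting with $\Alt(\kappa)$ exhibits $\Alt(\kappa)=\bigcup_{n\in\omega}(D_n\cap\Alt(\kappa))$ as a countable union of sets discrete in the $\tau^*$-subspace topology on $\Alt(\kappa)$, which coincides with $\tau$. The main (and essentially only) obstacle is the routine verification of shift-invariance of $\tau^*$ across the two cosets, which is rendered painless by the normality of $\Alt(\kappa)$ in $\Sym_\fin(\kappa)$.
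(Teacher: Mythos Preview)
Your reduction to the cited result for $\Sym_\fin(\kappa)$ has a genuine gap in the verification of shift-invariance of $\tau^*$. The phrase ``right multiplications are handled symmetrically'' hides an asymmetry caused by your choice of the identification $\pi\Alt(\kappa)\cong\Alt(\kappa)$ via \emph{left} multiplication by $\pi$. Under this identification, right multiplication by $\pi$ on $\Alt(\kappa)$ sends $\beta$ to $\beta\pi=\pi(\pi^{-1}\beta\pi)$, so in coordinates it becomes the map $\beta\mapsto\pi^{-1}\beta\pi$. For $\tau^*$ to be shift-invariant you therefore need conjugation by $\pi$ to be a $\tau$-homeomorphism of $\Alt(\kappa)$. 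But conjugation by a transposition is an \emph{outer} automorphism of $\Alt(\kappa)$ (its centralizer in $\Sym_\fin(\kappa)$ meets $\Alt(\kappa)$ trivially for $\kappa\ge 3$), and shift-invariance of $\tau$ only guarantees that \emph{inner} automorphisms are homeomorphisms. Nothing in the hypotheses forces this outer automorphism to be $\tau$-continuous, so $\tau^*$ need not be shift-invariant and the cited theorem cannot be invoked.

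Attempts to repair this by symmetrizing the topology run into trouble: passing to the finer topology $\tau\vee\sigma(\tau)$ (where $\sigma$ is conjugation by $\pi$) does make $\sigma$ a homeomorphism, but $\sigma$-discreteness in the finer topology does not descend to $\tau$; passing to the coarser $\tau\cap\sigma(\tau)$ may destroy Hausdorffness. The paper sidesteps the whole issue by arguing directly inside $\Alt(\kappa)$: it stratifies by support size, $\Alt(\kappa)=\bigcup_n\Alt_n(\kappa)$, and for each $f\in\Alt_n(\kappa)$ builds a $\tau$-open neighborhood using only the commutation relations with $3$-cycles $\pi_{x,a,b}\in\Alt(\kappa)$, so that the neighborhood meets $\Alt_n(\kappa)$ in the finite set of permutations with the same support as $f$. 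This uses nothing beyond shift-invariance of $\tau$ on $\Alt(\kappa)$ itself.
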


\begin{proof} If the cardinal $\kappa$ is finite, then the alternating group $\Alt(\kappa)$ is finite and hence discrete in the topology $\tau$. So, we assume that $\kappa$ is infinite.  To prove the proposition, it suffices to check that for every $n\in\w$ the subspace $\Alt_{n}(\kappa)=\{f\in\Alt(\kappa):|\supp(f)|= n\}$ of $\Alt(\kappa)$ is discrete. Given any permutation $f\in \Alt_{n}(\kappa)$ we shall construct an open set $O_f\in\tau$ such that $f\in O_f\cap \Alt_n(\kappa)\subset \{g\in \Alt(\kappa):\supp(g)=\supp(f)\}$.

Choose two disjoint subsets $A,B\subset \kappa\setminus\supp(f)$ of cardinality $|A|=|B|=n+1$ and for any points $x\in\supp(f)$ and $a\in A$, $b\in B$  consider the even permutation $\pi_{x,a,b}\in \Alt_{3}(\kappa)$ with support $\supp(\pi_{x,a,b})=\{x,a,b\}$ such that $\pi_{x,a,b}(x)=a$, $\pi_{x,a,b}(a)=b$ and $\pi_{x,a,b}(b)=x$. Since the topology $\tau$ on $\Alt(\kappa)$ is shift-invariant and Hausdorff, the set $O_{x,a,b}=\{g\in\Alt(\kappa):x_{x,a,b}\circ g\ne g\circ \pi_{x,a,b}\}$ is $\tau$-open. Taking into account that $\pi_{x,a,b}\circ f(x)=f(x)\ne f(a)=f\circ \pi_{x,a,b}(x)$, we conclude that the permutation $f$ belongs to the $\tau$-open set $O_{x,a,b}$ and $O_f=\bigcap_{x\in \supp(f)}\bigcap_{a\in A}\bigcap_{b\in B}O_{x,a,b}$ is a $\tau$-open neighborhood of $f$. We claim that the open set $O_f$ has the desired property: $O_f\cap\Alt_{n}(\kappa)\subset\{g\in \Alt(\kappa):\supp(g)=\supp(f)\}$. Take any permutation $g\in O_f\cap\Alt_n(\kappa)$. Assuming that $\supp(g)\ne\supp(f)$ and taking into account that $|\supp(g)|=|\supp(g)|=n$, we conclude that $\supp(f)\setminus\supp(g)$ contains some point $x$. Since $|A|=|B|>|\supp(g)|$, we can choose points $a\in A\setminus\supp(g)$ and $b\in B\setminus\supp(g)$. Then $\supp(\pi_{x,a,b})\cap \supp(g)=\{x,a,b\}\cap\supp(g)=\emptyset$ and hence $\pi_{x,a,b}\circ g=g\circ \pi_{x,a,b}$, which contradicts the inclusion $g\in O_f$. This contradiction shows that the subspace $O_f\cap\Alt_n(\kappa)\subset\{g\in \Alt(\kappa):\supp(g)=\supp(f)\}$ is finite and hence discrete. Then the point $f$ is isolated in the discrete $\tau$-open subset $O_f\cap\Alt_n(\kappa)$ of $\Alt_n(\kappa)$ and hence is isolated in $\Alt_n(\kappa)$, witnessing that the subspace $\Alt_n(\kappa)$ of the topological space $(\Alt(\kappa),\tau)$ is discrete.
\end{proof}

In the following lemma we prove another equality of Theorem~\ref{main}.
%The following lemma combined with Lemma~\ref{l1} implies our principal Theorem~\ref{main}.

\begin{lemma}\label{l2} For any infinite cardinal $\kappa$ and any group $G$ with $\Alt(\kappa)\subset G\subset\Sym(\kappa)$ we get
$\kappa=s_s(G)$.
\end{lemma}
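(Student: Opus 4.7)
The plan is to handle the two inequalities $s_s(G)\le\kappa$ and $s_s(G)\ge\kappa$ separately, leveraging results already proved. For the upper bound, the established diagram of cardinal group invariants gives $s_s(G)\le s_l(G)\le w_l(G)$, and Lemma~\ref{l1} supplies $w_l(G)=\kappa$; hence $s_s(G)\le\kappa$ with no further work required.

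For the lower bound, I would fix an arbitrary topology $\tau\in\Tau_s(G)$ and exhibit a discrete subspace of $(G,\tau)$ of cardinality $\kappa$. The natural candidate is the set of $3$-cycles $\Alt_3(\kappa)=\{f\in\Alt(\kappa):|\supp(f)|=3\}\subset\Alt(\kappa)\subset G$. A direct count shows $|\Alt_3(\kappa)|=\binom{\kappa}{3}\cdot 2=\kappa$, since $\kappa$ is infinite.

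To see that $\Alt_3(\kappa)$ is discrete in $(G,\tau)$, I would observe that the subspace topology $\tau'$ induced on $\Alt(\kappa)$ is Hausdorff and shift-invariant under shifts by elements of $\Alt(\kappa)$, because every two-sided shift $s_{a,b}$ on $(G,\tau)$ is a homeomorphism of $(G,\tau)$ that preserves the subgroup $\Alt(\kappa)$. Thus $\tau'\in\Tau_s(\Alt(\kappa))$, and the argument in the proof of Theorem~\ref{sigmad} applies verbatim to $(\Alt(\kappa),\tau')$, yielding that each $\Alt_n(\kappa)$, in particular $\Alt_3(\kappa)$, is discrete in $\tau'$ and therefore in $\tau$. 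Consequently $s(G,\tau)\ge|\Alt_3(\kappa)|=\kappa$ for every $\tau\in\Tau_s(G)$, so $s_s(G)\ge\kappa$. I do not foresee a genuine obstacle, as both directions reduce cleanly to earlier results; the only point requiring care is the routine check that restricting a shift-invariant Hausdorff topology on $G$ to the subgroup $\Alt(\kappa)$ produces a shift-invariant Hausdorff topology, so that Theorem~\ref{sigmad} is applicable.
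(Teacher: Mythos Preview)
Your proof is correct and follows essentially the same approach as the paper: the upper bound via $s_s(G)\le w_l(G)=\kappa$ from Lemma~\ref{l1}, and the lower bound via Theorem~\ref{sigmad} applied to the induced shift-invariant Hausdorff topology on $\Alt(\kappa)$. The only cosmetic difference is that you directly exhibit the single discrete level $\Alt_3(\kappa)$ of cardinality $\kappa$, whereas the paper argues by contradiction and pigeonhole on the full $\sigma$-discrete decomposition $\Alt(\kappa)=\bigcup_{n}\Alt_n(\kappa)$; your version is marginally more direct.
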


\begin{proof} Lemma~\ref{l1} and obvious inequalities between the cardinal group invariants imply $s_s(G)\le w_l(G)=\kappa$. It remains to prove that $s_s(G)\ge \kappa$. Assuming that $s_s(G)<\kappa$ we could find a Hausdorff shift-invariant topology $\tau$ with spread $s(G,\tau)<\kappa$. By Proposition~\ref{sigmad},  the subgroup $\Alt(\kappa)$ of $G$ is $\sigma$-discrete in the topology $\tau$. Consequently, $\Alt(\kappa)=\bigcup_{i\in\w}D_i$ where each subspace $D_i$ is discrete in $(G,\tau)$. Since $|\Alt(\kappa)|=\kappa>s(G,\tau)$, some set $D_i$ has cardinality $|D_i|>s(G,\tau)$, which contradicts the definition of the spread $s(G,\tau)$. This contradiction shows that $s_s(G)\ge\kappa$.
\end{proof}

Estabilishing the equality $\kappa=u_c(G)$ in Theorem~\ref{main} is the most difficult part of the proof, which requires some preparatory work.

For a cardinal $\kappa$ and a subgroup $G\subset\Sym(\kappa)$ by $\tau_p$ we denote the topology of point-wise convergence on $G$. This topology is generated by the subbase consisting of the sets
$$G(a,b)=\{g\in G:g(a)=b\}\mbox{ \ where \ }a,b\in \kappa.$$ By Theorem~2.1 of \cite{BGP}, on any group $G$ with $\Sym_\fin(\kappa)\subset G\subset\Sym(\kappa)$, the topology $\tau_p$ coincides with the {\em restricted Zariski topology} $\Zeta_G'$ on $G$, generated by the subbase consisting of the sets $$G\setminus\{a\},\;\;\{x\in G:xbx^{-1}\ne aba^{-1}\}\mbox{ \  and \ }\{x\in G:(xcx^{-1})b(xcx^{-1})^{-1}\ne b\}$$ where $a,b,c\in G$ and $b^2=c^2=1_G$. It is easy to check that the topology $\Zeta'_G$ is shift-invariant. The following theorem generalizes Theorem~2.1 of \cite{BGP}.

\begin{theorem}\label{t:zariski} Let $\kappa$ be a cardinal. For any group $G$ with $\Alt(\kappa)\subset G\subset\Sym(\kappa)$ the topology $\tau_p$ of pointwise convergence on $G$ coincides with the restricted Zariski topology $\Zeta_G'$.
\end{theorem}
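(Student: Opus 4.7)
The plan is to verify both inclusions $\Zeta_G'\subseteq\tau_p$ and $\tau_p\subseteq\Zeta_G'$, exploiting that both topologies are shift-invariant and are hence determined by their neighbourhood bases at the identity.

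For $\Zeta_G'\subseteq\tau_p$, I will check that each subbasic Zariski-open set is $\tau_p$-open. The linchpin is the identity
\[
\{x\in G:(xbx^{-1})(i)=j\}=\bigcup_{k\in\kappa}\bigl(G(k,i)\cap G(b(k),j)\bigr),
\]
valid for any $b\in G$ and any $i,j\in\kappa$, whose right side is a union of basic $\tau_p$-open sets. Consequently $\{x:xbx^{-1}\ne aba^{-1}\}$ is $\tau_p$-open (as a union over $i\in\kappa$ and $j\ne aba^{-1}(i)$ of such sets), and an analogous computation renders the commutator-type subbasic open $\tau_p$-open. The cofinite first type is $\tau_p$-open because $\tau_p$ is Hausdorff.

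For $\tau_p\subseteq\Zeta_G'$, by shift-invariance it suffices to produce, for each $a\in\kappa$, a basic $\Zeta_G'$-open set $V$ with $\id\in V\subseteq G(a,a)$: then for any $g_0\in G(a,b)$ the left translate $g_0V$ is a $\Zeta_G'$-basic neighbourhood of $g_0$ contained in $G(a,b)$. I construct $V$ as a finite intersection of commutator-type subbasic opens built from double transpositions in $\Alt(\kappa)\subset G$, the smallest involutions available here. Fix $25$ distinct points $a,p_1,\dots,p_9,q_1,\dots,q_{15}\in\kappa$ and set
\[
b_i=(a\,p_{3i-2})(p_{3i-1}\,p_{3i})\ (i=1,2,3),\qquad c_\alpha=(a\,q_{3\alpha-2})(q_{3\alpha-1}\,q_{3\alpha})\ (\alpha=1,\dots,5),
\]
all of which are involutions in $\Alt(\kappa)\subset G$. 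A direct evaluation at $a$ gives $b_ic_\alpha(a)=q_{3\alpha-2}\ne p_{3i-2}=c_\alpha b_i(a)$, so $b_i$ and $c_\alpha$ do not commute and the subbasic $\Zeta_G'$-opens $V_{i,\alpha}=\{x\in G:(xc_\alpha x^{-1})b_i\ne b_i(xc_\alpha x^{-1})\}$ all contain $\id$; set $V=\bigcap_{i=1}^{3}\bigcap_{\alpha=1}^{5}V_{i,\alpha}$.

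To verify $V\subseteq G(a,a)$, let $x\in G$ with $x(a)\ne a$. Writing $E_i=\supp(b_i)$ and $T_\alpha=\supp(c_\alpha)$, the three sets $E_i$ meet pairwise only at $a$, so $\bigcap_iE_i=\{a\}$, and there exists $i$ with $x(a)\notin E_i$; consequently $a\notin x^{-1}(E_i)$ and $x^{-1}(E_i)$ is a $4$-element subset of $\kappa\setminus\{a\}$. The five sets $T_\alpha\setminus\{a\}$ are pairwise disjoint and each of size $3$, so at most four of them can meet the $4$-element set $x^{-1}(E_i)$; some $\alpha$ therefore satisfies $(T_\alpha\setminus\{a\})\cap x^{-1}(E_i)=\emptyset$, which combined with $a\notin x^{-1}(E_i)$ gives $\supp(c_\alpha)\cap x^{-1}(E_i)=\emptyset$, i.e., $\supp(xc_\alpha x^{-1})\cap\supp(b_i)=\emptyset$. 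Two involutions with disjoint supports commute, so $x\notin V_{i,\alpha}\supseteq V$. The main obstacle is precisely this combinatorial design: whereas Theorem~2.1 of \cite{BGP} can use transpositions (supports of size $2$) and only three probes suffice, in the $\Alt(\kappa)\subset G$ setting one is forced to use double transpositions (supports of size $4$), which necessitates five probes $c_\alpha$ and a careful choice of fifteen ``padding'' points to keep all supports pairwise disjoint except at $a$.
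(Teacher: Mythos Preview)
Your proof is correct and takes a genuinely different route from the paper for the harder inclusion $\tau_p\subseteq\Zeta_G'$. The paper proceeds indirectly: it first shows that the pointwise stabilizer $G_A$ of any $6$-element set $A\subset\kappa$ is $\Zeta_G'$-closed, and then, by a contradiction argument (if $G_A$ were not open it would be nowhere dense, and a configuration with four disjoint $6$-element blocks leads to a permutation $u$ in a basic $\Zeta_G'$-neighbourhood of $1_G$ that simultaneously must and must not commute with a certain double transposition), deduces that $G_A$ is $\Zeta_G'$-open; the inclusion then follows because $g\,G_A\subset G(a,b)$ whenever $a\in A$ and $g(a)=b$. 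You bypass this Baire-flavoured detour entirely: for each $a$ you explicitly exhibit a finite intersection $V$ of commutator-type subbasic $\Zeta_G'$-opens with $\id\in V\subset G(a,a)$, using three ``targets'' $b_i$ and five ``probes'' $c_\alpha$ whose supports overlap only at $a$, and a neat pigeonhole (a $4$-element set can meet at most four of five disjoint $3$-element sets) to force disjoint supports and hence commutation whenever $x(a)\ne a$. Your argument is more elementary and fully constructive; the paper's argument, on the other hand, isolates the stronger intermediate fact that all $G_A$ are $\Zeta_G'$-clopen, which is of independent interest. One small omission: your construction uses $25$ distinct points and so tacitly assumes $\kappa$ infinite; you should add the one-line observation that for finite $\kappa$ both $\tau_p$ and $\Zeta_G'$ are $T_1$ topologies on a finite set and hence discrete.
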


\begin{proof} The proof of this theorem is just a suitable modification of the proof of Theorem 2.1 \cite{BGP}. Fix a cardinal $\kappa$ and a subgroup $G\subset\Sym(\kappa)$ containing the alternating group $\Alt(\kappa)$. If the cardinal $\kappa$ is finite, then so is the group $G$. In this case the $T_1$-topologies $\tau_p$ and $\Zeta'_G$ coincide with the discrete topology on $G$. So, we assume that the cardinal $\kappa$ is infinite.

For two distinct elements $x,y\in \kappa$ consider the unique transposition $t_{x,y}$ with support $\supp(t_{x,y})=\{x,y\}$. The transposition $t_{x,y}$ exchanges $x$ and $y$ by their places and does not move other points of $\kappa$. For a subset $A\subset\kappa$ consider the subgroups $G(A)=\{g\in G:\supp(g)\subset A\}$ and $G_A=\{g\in G:\supp(g)\cap A=\emptyset\}=\{g\in G:g|A=\id|A\}$.

\begin{lemma} For any 6-element subset $A\subset \kappa$ the subgroup $G_A$ is $\Zeta'_G$-closed in $G$.
\end{lemma}

\begin{proof} Given any permutation $f\in G\setminus G_A$ find a point $a\in A$ with $f(a)\ne a$. Next, choose a point $b\in A\setminus \{a,f(a)\}$ and two distinct points $c,d\in A\setminus\{a,b,f(a),f(b)\}$. Consider the even permutation $t=t_{a,b}\circ t_{c,d}\in\Alt(\kappa)\subset G$ and observe that $t\circ f(a)=f(a)\ne f(b)=f\circ t(a)$. Since $\supp(t)=\{a,b,c,d\}\subset A$, the transposition $t$ commutes with all permutations $g\in G_A$, which implies that $$O_f=\{g\in G:t\circ g\ne g\circ t\}=\{g\in G:tgt^{-1}\ne g\}$$is a $\Zeta'_G$-open neighborhood of $f$, disjoint with the subgroup $G_A$.
\end{proof}

\begin{lemma}\label{l:open} For each 6-element subset $A\subset \kappa$ the subgroup $G_A$ is $\Zeta_G'$-open in $G$.
\end{lemma}

\begin{proof} To derive a contradiction, assume that for some 6-element set $A'\subset \kappa$ the subgroup $G_{A'}$ is not $\Zeta'_G$-open. Being $\Zeta'_G$-closed, this subgroup is nowhere dense in the semi-topological group $(G,\Zeta'_G)$. Observe that for any 6-element subset $A\subset \kappa$ and any even permutation $f\in \Alt(G)$ with $f(A)=A'$, we get $G_{A}=f^{-1}\circ G_{A'}\circ f$, which implies that the subgroup $G_A$ is closed and nowhere dense in $(G,\Zeta'_G)$.

We claim that for every 6-element set $A\subset \kappa$ and any finite subset $B\subset \kappa$ the set $G(A,B)=\{g\in G:g(A)\subset B\}$ is nowhere dense in $(G,\Zeta'_G)$. Since $A$ and $B$ are finite, we can choose a finite set $F\subset G$ such that for any $g\in G(A,B)$ there exists $f\in F$ with $g|A=f|A$. Then $f^{-1}\circ g|A=\id_A$ which implies that $G(A,B)=\bigcup_{f\in F}f\circ G_A$ is nowhere dense in $(G,\Zeta'_G)$.

Now fix any four pairwise disjoint 6-element subsets $A_1,A_2,A_3,A_4\subset \kappa$ and consider their union $A=\bigcup_{i=1}^4A_i$. Consider the finite subset $T=\{t_{a_1,a_2}\circ t_{a_3,a_4}:(a_i)_{i=1}^4\in \prod_{i=1}^4A_i\}\subset \Alt(\kappa)\subset G$. For any permutations $t,s\in T$ with $t\circ s\ne s\circ t$ the set
$$U_{t,s}=\{u\in G:(usu^{-1})t(usu^{-1})^{-1}\ne t\}$$is a $\Zeta'_G$-open neighborhood of $1_G$ by the definition of the topology $\Zeta'_G$. Since $T$ is finite, the intersection
$$U=\bigcap\{U_{t,s}:t,s\in T,\;ts\ne st\}$$is a $\Zeta'_G$-open neighborhood of $1_G$. Choose a permutation $u\in U$ which does not belong to the nowhere dense subset $\bigcup_{i=1}^4G(A_i,A)$. For every $i\in\{1,2,3,4\}$ there is a point $a_i\in A_i$ such that $u(a_i)\notin A$. Choose any point $a_2'\in A_2\setminus\{a_2\}$ and consider the non-commuting permutations $t=t_{a_1,a_2'}\circ t_{a_3,a_4}$ and $s=t_{a_1,a_2}\circ t_{a_3,a_4}$ in $T$.
It follows from $u\in U$ that the permutation $v=usu^{-1}$ does not commute with the permutation $t$. On the other hand, the support $\supp(v)=\supp(usu^{-1})=u(\{a_1,a_2,a_3,a_4\})$ does not intersect the set $A\supset \{a_1,a_2',a_3,a_4\}=\supp(t)$, which implies that $vt=tv$. This contradiction completes the proof of Lemma~\ref{l:open}.
\end{proof}

Now we able to prove that $\Zeta'_G=\tau_p$. Taking into account that $\tau_p$ is a Hausdorff group topology on $G$, we conclude that $\Zeta'_G\subset\tau_p$. To prove the reverse inclusion, it suffices to check that for every $a,b\in \kappa$ the subbasic $\tau_p$-open set $G(a,b)=\{g\in G:g(a)=b\}$ is $\Zeta'_G$-open. Choose any 6-element subset $A\subset\kappa$ containing the point $a$. By Lemma~\ref{l:open} the subgroup $G_A$ is $\Zeta'_G$-open and hence for any $g\in G(a,b)$ the set $g\circ G_A\subset G(a,b)$ is a $\Zeta'_G$-open neighborhood of $g$, witnessing that the set $G(a,b)$ is $\Zeta'_G$-open.
\end{proof}

Combining Theorem~\ref{t:zariski} with Proposition~4.1 of \cite{BGP}, we obtain the following generalization of Theorem 4.2 of \cite{BGP}.

\begin{theorem}\label{t:tp} Let $\kappa$ be an infinite cardinal and $G$ be a group such that $\Alt(\kappa)\subset G\subset\Sym(\kappa)$. Each shift-invariant $T_1$-topology $\tau$ with separately continuous commutator on $G$ contains the topology $\tau_p$ of pointwise convergence on $G$.
\end{theorem}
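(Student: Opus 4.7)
The plan is to invoke Theorem~\ref{t:zariski} to identify $\tau_p$ with the restricted Zariski topology $\Zeta'_G$, thereby reducing the task to showing $\Zeta'_G\subset\tau$. Since $\Zeta'_G$ is generated by the explicit subbase consisting of sets of the forms $G\setminus\{a\}$, $\{x\in G:xbx^{-1}\ne aba^{-1}\}$, and $\{x\in G:(xcx^{-1})b(xcx^{-1})^{-1}\ne b\}$ with $a,b,c\in G$ and $b^2=c^2=1_G$, it suffices to verify that each of these three types of sets is $\tau$-open, using only the $T_1$ axiom, shift-invariance of $\tau$, and separate continuity of the commutator. This is the substance of Proposition~4.1 of \cite{BGP} cited in the theorem.

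Sets of the first form are $\tau$-open immediately from the $T_1$ axiom. For the second form, I would observe that $xbx^{-1}=aba^{-1}$ is equivalent to $a^{-1}x\in C_G(b)$, where $C_G(b)=\{y\in G:[y,b]=1_G\}$ denotes the centralizer of $b$. By separate continuity of the commutator, the map $y\mapsto[y,b]$ is $\tau$-continuous, so $C_G(b)$ is the preimage of the $\tau$-closed singleton $\{1_G\}$ and is itself $\tau$-closed. Shift-invariance of $\tau$ then gives that the coset $a\cdot C_G(b)$ is $\tau$-closed, and its complement is exactly the set of the second form.

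The hardest step is the third form, since $\tau$ is not assumed to be a group topology and so I cannot directly invoke joint continuity of conjugation. The key algebraic identity I would exploit is
\[
xcx^{-1}\;=\;(xcx^{-1}c^{-1})\cdot c\;=\;[x,c]\cdot c,
\]
which exhibits $x\mapsto xcx^{-1}$ as the composition of the $\tau$-continuous map $x\mapsto[x,c]$ (separate continuity of the commutator with second argument fixed) with the $\tau$-homeomorphism of right-multiplication by $c$ (shift-invariance). Composing further with the $\tau$-continuous map $y\mapsto[y,b]$ yields a $\tau$-continuous function $x\mapsto[xcx^{-1},b]$, whose preimage of $\{1_G\}$ is precisely the complement of the set of the third form. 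Hence that set is $\tau$-open as well, which completes the verification.

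The main obstacle is conceptual rather than technical: without joint continuity, the expression $xcx^{-1}$ depends on $x$ on both sides, and one needs the algebraic rearrangement $xcx^{-1}=[x,c]c$ to convert this two-sided dependence into a one-sided dependence that separate continuity of the commutator can handle. Once this trick is in hand, the rest is a straightforward verification of $\tau$-openness of the subbasic sets of $\Zeta'_G$.
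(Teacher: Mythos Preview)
Your proposal is correct and follows exactly the route the paper takes: invoke Theorem~\ref{t:zariski} to get $\tau_p=\Zeta'_G$, then show $\Zeta'_G\subset\tau$. The paper simply cites Proposition~4.1 of \cite{BGP} for the second step, whereas you have written out its proof; your treatment of the three subbasic families (including the key rewriting $xcx^{-1}=[x,c]\cdot c$ to handle the third family without joint continuity) is precisely the content of that proposition.
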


Now we are able to prove the final equality of Theorem~\ref{main}.

\begin{lemma}\label{l3} For any infinite cardinal $\kappa$ and any group $G$ with $\Alt(G)\subset G\subset\Sym(G)$ we get $\kappa=u_c(G)$.
\end{lemma}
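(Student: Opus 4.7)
The plan is to pivot around the topology $\tau_p$ of pointwise convergence on $G$, with Theorem~\ref{t:tp} supplying the comparison between $\tau_p$ and an arbitrary $\tau\in\Tau_c(G)$. I will sandwich $u_c(G)$ between $\kappa$ from above by taking $\tau_p$ as an admissible topology, and from below by exploiting the inclusion $\tau\supset\tau_p$ for every $\tau\in\Tau_c(G)$.

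For the upper bound, observe that $\tau_p$ is a Hausdorff group topology on $G\subset\Sym(\kappa)$, hence $\tau_p\in\Tau_g(G)\subset\Tau_c(G)$; so $u_c(G)\le u(G,\tau_p)$ and it remains to estimate $u(G,\tau_p)$. Any uniformly discrete $D\subset(G,\tau_p)$ is witnessed by a basic neighborhood $U=\bigcap_{i=1}^n G(a_i,a_i)$ of $1_G$, and the condition $x^{-1}y\notin U$ for distinct $x,y\in D$ forces the tuples $(d(a_1),\dots,d(a_n))$, $d\in D$, to be pairwise distinct; this embeds $D$ into $\kappa^n$ and gives $|D|\le\kappa$.

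For the lower bound, fix $\tau\in\Tau_c(G)$. Theorem~\ref{t:tp} yields $\tau\supset\tau_p$; since uniform discreteness is preserved when the topology is refined, $u(G,\tau)\ge u(G,\tau_p)$, so it is enough to produce a $\tau_p$-uniformly discrete subset of $G$ of cardinality $\kappa$. I would use the $3$-cycles $\sigma_i=(0,i,1)\in\Alt(\kappa)\subset G$ for $i\in\kappa\setminus\{0,1\}$: with respect to $U=G(0,0)$, the values $\sigma_i(0)=i$ are pairwise distinct, which is exactly the uniform discreteness condition for this $U$.

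I do not foresee a serious obstacle: the substantive work has already been carried out in Theorem~\ref{t:tp}, which identifies $\tau_p$ as the coarsest topology of the required type on $G$. What remains is the short verification that $\tau_p$ itself is an admissible topology witnessing $u_c(G)\le\kappa$ and the explicit construction of a $\kappa$-sized uniformly discrete set of $3$-cycles.
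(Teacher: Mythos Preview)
Your proof is correct and, for the substantive direction (the lower bound), follows the same route as the paper: invoke Theorem~\ref{t:tp} to see that any $\tau\in\Tau_c(G)$ contains $\tau_p$, then use the $\tau_p$-open stabilizer $G(0,0)$ together with permutations sending $0$ to distinct points to produce a uniformly discrete set of size $\kappa$. The only difference is in the easy direction: the paper gets $u_c(G)\le\kappa$ from the chain $u_c(G)\le w_l(G)=\kappa$ via Lemma~\ref{l1}, whereas you compute $u(G,\tau_p)\le\kappa$ directly, which is equally short and slightly more self-contained.
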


\begin{proof} It follows from Lemma~\ref{l1} that $u_c(G)\le w_l(G)=\kappa$. So, it remains to prove that $u_c(G)\ge \kappa$. To derive a contradiction, assume that $u_c(G)<\kappa$ and find a  topology $\tau\in\Tau_c(G)$ such that $u(G,\tau)<\kappa$. By Theorem~\ref{t:tp}, $\tau_p\subset \tau$, which implies that the subgroup $G_0=\{g\in G:g(0)=0\}$ is $\tau$-open. For every $i\in\kappa\setminus \{0\}$ fix a permutation $g_i\in\Alt(\kappa)$ such that $g_i(0)=i$ and observe that for every $0<i\ne j<\kappa$ we get $g_j\notin g_i\circ G_0$, which means that the set $D=\{g_i\}_{0<i<\kappa}$ is uniformly discrete in $(G,\tau)$. So, $u(G,\tau)\ge \kappa$, which is a desired contradiction.
\end{proof}

\begin{problem} Let $\kappa$ be a cardinal and $G$ be a group with $\Alt(\kappa)\subset G\subset\Sym(\kappa)$. Calculate the value of the cardinal invariant $u_s(G)$. Is $u_s(G)=\kappa$?
\end{problem}

The equalities in Theorem~\ref{main} hold also for some other groups, in particular for the additive group $\IR$ of real numbers.

\begin{example} The group $\IR$ does admit a linear group topology with countable weight and hence has $u_s(\IR)=w_l(\IR)=\cn(\IR)=\w$.
\end{example}

The equalities in this example follows from our next theorem evaluating the cardinal group invariants of infinite abelian groups. %In this theorem for a cardinal $\kappa$ by $\log \kappa$ we denote its {\em logarithm} defined as the smallest cardinal $\lambda$ such that $\kappa\le 2^\lambda$.

\begin{theorem}\label{t:ab} Each infinite abelian group $G$ has $$
\log\log |G|\le s_s(G)\le w_s(G)=w_l(G)=\log|G|\mbox{ \ and \ }
u_s(G)=u_l(G)=\cn(G)=\w.
$$
\end{theorem}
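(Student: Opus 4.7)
The plan is to separate the various inequalities into three groups. The two lower bounds $\log|G|\le w_s(G)$ and $\log\log|G|\le s_s(G)$ are purely set-topological: for any Hausdorff topology $\tau$ on $G$ one has $|G|\le 2^{w(G,\tau)}$ (sending each point to its set of basic neighbourhoods) and, by Theorem~5.5 of \cite{Hodel}, $|G|\le 2^{2^{s(G,\tau)}}$. Minimizing over $\tau\in\Tau_s(G)$ and combining with the general diagram inequalities $s_s\le w_s\le w_l$ and $u_s\le u_l$ already yields the left half of the claimed chain.

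The heart of the argument is the upper bound $w_l(G)\le\log|G|$, which will simultaneously deliver $u_l(G)\le\w$. Set $\kappa=|G|$ and $\lambda=\log\kappa$, so $\kappa\le 2^\lambda$. Let $A=\IQ^{(\w)}\oplus\IQ/\mathbb Z$, the ``universal'' countable divisible abelian group (torsion-free rank and Pr\"ufer $p$-rank equal to $\w$ for every prime $p$). A direct rank computation shows that every rank invariant of the divisible abelian group $A^\lambda$ equals $2^\lambda$, whereas the divisible hull $D(G)$ has every rank invariant at most $\kappa\le 2^\lambda$; the structure theorem for divisible abelian groups therefore produces an embedding $G\hookrightarrow D(G)\hookrightarrow A^\lambda$. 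The $\lambda$ coordinate projections $A^\lambda\to A$ restrict to $\lambda$ homomorphisms $G\to A$ whose kernels $\{K_\alpha\}_{\alpha<\lambda}$ are subgroups of $G$ of index at most $|A|=\w\le\lambda$, with $\bigcap_{\alpha<\lambda}K_\alpha=\{1_G\}$, so Propositions~\ref{p:lw-char} and~\ref{p:ul-char} give $w_l(G)\le\lambda$ and $u_l(G)\le\w$. To obtain $u_s(G)=\w$, I would add the elementary observation that in any Hausdorff shift-invariant topology on $G$ and for any $n\in\w$, a set of $n$ pairwise distinct elements of $G$ is uniformly discrete relative to any neighbourhood of $1_G$ avoiding the finitely many non-identity quotients $g_i^{-1}g_j$; hence $u(G,\tau)\ge\w$ for every $\tau\in\Tau_s(G)$.

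The final step $\cn(G)\le\w$ is the most combinatorial. Given a finite group $F$ and isomorphisms $f_i\colon F\to F_i\subseteq G$ for $i<\w_1$, the plan is to apply the $\Delta$-system lemma to the family $\{F_i\}_{i<\w_1}$ of finite subsets of $G$ to extract an uncountable $I\subseteq\w_1$ and a finite root $R\subseteq G$ with $F_i\cap F_j=R$ for all distinct $i,j\in I$. Since $R$ is the intersection of two subgroups, it is itself a subgroup of $G$. A first pigeonhole on the finitely many subgroups of $F$ stabilizes $f_i^{-1}(R)$ at a common value $H^*\le F$ on an uncountable $I'\subseteq I$, and a second pigeonhole on the finitely many isomorphisms $H^*\to R$ stabilizes $f_i|_{H^*}$ at some $\varphi^*$ on an uncountable $I''\subseteq I'$. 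For any $i<j$ in $I''$, abelianness of $G$ yields $\langle F_i\cup F_j\rangle=F_i+F_j$, and the agreement $f_i|_{H^*}=f_j|_{H^*}=\varphi^*$ on $F_i\cap F_j=R$ is exactly the compatibility needed to glue $f_i^{-1}$ and $f_j^{-1}$ into a well-defined homomorphism $\phi\colon F_i+F_j\to F$ satisfying $\phi\circ f_i=\phi\circ f_j=\id_F$. The main obstacle I anticipate is the rank computation underlying the embedding $G\hookrightarrow A^\lambda$; once that is in place, the $\Delta$-system step for $\cn(G)\le\w$ is routine.
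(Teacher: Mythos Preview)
Your proposal is correct and follows essentially the same strategy as the paper: the lower bounds come from Hodel's cardinal inequalities, the upper bound $w_l(G)\le\log|G|$ is obtained by embedding $G$ into a $\log|G|$-indexed product of countable divisible groups (the paper uses $\prod_{p\in\IP_0}\IQ_p^{\log|G|}$ and reads off the weight of the product topology directly, whereas you use $A^{\log|G|}$ together with Proposition~\ref{p:lw-char}; both routes rest on the structure theorem for divisible abelian groups and the rank count $2^{\log|G|}\ge|G|$), the bound $u_l(G)\le\w$ comes from the same family of countable-index kernels via Proposition~\ref{p:ul-char}, and the $\Delta$-system/pigeonhole argument for $\cn(G)=\w$ is identical to the paper's. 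One small slip: your group $A=\IQ^{(\w)}\oplus\IQ/\mathbb Z$ has Pr\"ufer $p$-rank $1$, not $\w$; this is harmless, since for infinite $\lambda$ the $p$-socle of $\IQ_p^\lambda$ is $(\mathbb Z/p)^\lambda$ of $\mathbb F_p$-dimension $2^\lambda$, so the embedding $D(G)\hookrightarrow A^\lambda$ still goes through.
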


\begin{proof} The inequalities $s_s(G)\le w_s(G)\le w_l(G)$ hold for any infinite group $G$. The lower bounds $\log\log|G|\le s(G)$ and $\log|G|\le w(G)$ follow from the inequalities $|X|\le 2^{2^{s(X,\tau)}}$ and $|X|\le 2^{w(X,\tau)}$ holding for any Hausdorff topological space $(X,\tau)$ (see Theorems 5.5 and 3.1 in \cite{Hodel}).
\smallskip

Now we prove that $w_l(G)\le\log|G|$. Since each infinite abelian group embeds into a divisible abelian group of the same cardinality \cite[4.1.6]{Rob}, we can assume that $G$ is divisible (which means that for every $x\in G$ and $n\in\IN$ there exists $y\in G$ such that $y^n=x$). It is clear that the additive group $\IQ_0$ of rational numbers is divisible and so is the quasi-cyclic $p$-group $\IQ_p=\{z\in\IC:\exists k\in\IN$ $z^{p^k}=1\}$ for any prime number $p$. Let $\IP$ denote the set of all prime numbers and $\IP_0=\{0\}\cup\IP$. Endow the countable groups $\IQ_p$, $p\in\IP_0$, with the discrete topologies and for the cardinal $\kappa=\log|G|$ consider the Tychonoff product $L=\prod_{p\in\IP_0}\IQ_p^\kappa$. Taking into account that $L$ is a linear topological group of weight $\kappa$, we conclude that $w_l(L)\le \kappa$. It remains to prove that the divisible group $G$ embeds into $L$.

For a cardinal $\lambda$ and an abelian group $A$ by $A^{(\lambda)}$ we denote the direct sum of $\lambda$ many copies of the groups $A$. By \cite[4.1.5]{Rob}, the group $G$, being divisible, is isomorphic ($\approx$) to the direct sum $\bigoplus_{p\in \IP_0}\IQ_p^{(k_p)}$ for some cardinals $k_p\le|G|$, $p\in\IP_0$. By the same reason, for every $p\in\IP_0$ the $\kappa$-th power $\IQ_p^\kappa$ of $\IQ_p$ is isomorphic to the direct sum $(\IQ_0\oplus\IQ_p)^{(2^\kappa)}$ of $2^\kappa$ many copies of the groups $\IQ_0\oplus \IQ_p$. Since $k_p\le|G|\le 2^\kappa$ for all $p\in\IP_0$, the group $G\approx   \bigoplus_{p\in \IP_0}\IQ_p^{(k_p)}$ embeds into the direct sum $\bigoplus_{p\in\IP_0}\IQ_p^{(2^\kappa)}\approx L$. Now the monotonicity of the cardinal group invariant $w_l$ ensures that $w_l(G)\le w_l(L)\le\kappa=\log|G|$. Combining this inequality with $w_s(G)\ge\log |G|$, we get the equalities $w_s(G)=w_l(G)=\log |G|$.
\smallskip

Next, we prove that $u_s(G)=u_l(G)=\w$. Since $\w\le u_s(G)\le u_l(G)$, it suffices to show that $u_l(G)\le \w$. Since the cardinal group invariant $u_l$ is monotone, we can assume that the group $G$ is divisible and hence is isomorphic to the direct sum of countable abelian groups. This implies that the trivial subgroup of $G$ can be written as the intersection of subgroups of at most countable index in $G$. By Proposition~\ref{p:ul-char}, $u_l(G)\le \w$.
\smallskip

Finally, we prove that $\cn(G)=\w$. It suffices to prove that $G$ satisfies the weak $\w^+$-compatibility condition. Fix a finite group $F$ and isomorphisms $f_i:F\to F_i$, $i\in\w_1$, onto subgroups $F_i\subset G$. The $\Delta$-Lemma \cite[9.18]{Jech} yields an uncountable subset $\Omega\subset\w_1$ and a finite subgroup $D\subset G$ such that $F_i\cap F_j=D$ for any distinct indices $i,j\in\Omega$. By the Pigeonhole Principle, for some subgroup $E\subset F$ the set $\Omega_1=\{i\in\Omega: f^{-1}_i(D)=E\}$ is uncountable. Since the set of isomorphisms from $E$ to $D$ is finite, for some isomorphism $f:E\to D$ the set $\Omega_2=\{i\in\Omega_1:f_i|E=f\}$ is uncountable. Now take any two ordinals $i<j$ in the set $\Omega_2$ and consider the subgroup $F_{ij}=F_i+F_j$ of $G$. Define a homomorphism $\phi:F_{ij}\to F$ assigning to a point $x=x_i+x_j$ with $x_i\in F_i$ and $x_j\in F_j$ the point $\phi(x)=f_i^{-1}(x_i)+f_j^{-1}(x_j)$. Let us show that the map $\phi$ is well-defined. Assume that $x=x_i'+x_j'$ for some points $x_i'\in F_i$ and $x_j'\in F_j$. Then $0=x-x=(x_i-x_i')+(x_j-x_j')$ implies that $x_i-x_i'=x_j'-x_j\in F_i\cap F_j=D$ and hence
$$f_i^{-1}(x_i)-f_i^{-1}(x_i')=f_i^{-1}(x_i-x_i')=f^{-1}(x_i-x_i')=f^{-1}(x_j'-x_j)=f_j^{-1}(x_j'-x_j)=f_j^{-1}(x_j')-f_j^{-1}(x_j)$$and finally $f_i^{-1}(x_i)+f_j^{-1}(x_j)=f_i^{-1}(x_i')+f_j^{-1}(x_j')$. Therefore the map $\phi:F_{ij}\to F$ is a well-defined homomorphism such that $\phi\circ f_i=\phi\circ f_j=\id_F$, witnessing that the group $G$ satisfies the weak $\w^+$-compatibility condition and $wc(G)=\w$.
\end{proof}

Theorem~\ref{t:ab} implies that for infinite abelian groups the diagram describing the relations between the cardinal group invariants takes the following form:
$$
\xymatrix{
&w_s\ar@{=}[r]&w_c\ar@{=}[r]&w_g\ar@{=}[r]&w_l\ar@{=}[r]&\log|\cdot|\\
\log\log|\cdot|\ar[r]&s_s\ar[r]\ar[u]&s_c\ar[r]\ar[u]&s_g\ar[r]\ar[u]&s_l\ar[u]\\
&u_s\ar@{=}[r]\ar[u]&u_c\ar@{=}[r]\ar[u]&u_g\ar@{=}[r]\ar[u]&u_l\ar[u]&\cn=\w\ar@{=}[l]
}
$$
Looking at this diagram it is natural to ask about the values of the cardinal invariants in the middle row. Are they equal to $\log\log|G|$ or to $\log|G|$? Surprisingly, the answer depends on additional set-theoretic assumptions.

\begin{theorem} \begin{enumerate}
\item PFA implies that each group $G$ of cardinality $|G|>\mathfrak c$ has $s_s(G)>\w=\log\log \mathfrak c^+$.
\item For any regular uncountable cardinals $\lambda\le\kappa$ it is consistent that $2^\w=\lambda$, $2^{\w_1}=\kappa$ and each infinite abelian group $G$ of cardinality $|G|\le \kappa$ has linear spread $s_l(G)=\w=\log\log|G|$.
\end{enumerate}
\end{theorem}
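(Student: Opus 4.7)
The two items require very different techniques: (1) is a deduction from PFA, whereas (2) demands a dedicated forcing construction. I sketch each separately and flag the main obstacles.

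For (1) I would argue by contrapositive. Assume $s_s(G)\le\w$ and fix a shift-invariant Hausdorff topology $\tau$ on $G$ with $s(G,\tau)\le\w$. The key ingredient is a well-known consequence of PFA: every \emph{regular} Hausdorff space $X$ of countable spread has $|X|\le 2^\w$. This follows from Todor\v{c}evi\'c's theorem excluding $S$-spaces under PFA (so regularity plus $s(X)=\w$ forces $hd(X)=\w$), combined with the classical bound $|X|\le 2^{hd(X)}$ valid in regular spaces. To apply this to $(G,\tau)$ I pass to the semi-regularization $\tau^{sr}$ (the topology generated by the $\tau$-regular-open sets). Since shifts are $\tau$-homeomorphisms they permute regular open sets, so $\tau^{sr}$ is still shift-invariant; it is regular by construction; and $s(G,\tau^{sr})\le s(G,\tau)\le\w$ because fewer open sets are available to witness discreteness. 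The PFA bound then yields $|G|\le 2^\w=\mathfrak c$, contradicting $|G|>\mathfrak c$. The main obstacle is that Hausdorffness of $\tau$ need not descend to $\tau^{sr}$ in general; one has to exploit the shift-invariance and the group structure (for instance by showing that $\{1_G\}$ is the intersection of regular-closed neighborhoods of the identity and translating via $s_{a,1_G}$) to ensure $\tau^{sr}$ still separates points.

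For (2) I would start from a ground model of GCH and construct a forcing extension making $2^\w=\lambda$, $2^{\w_1}=\kappa$, and witnessing the spread bound. For abelian $G$ of cardinality at most $\lambda=\mathfrak c$, Theorem~\ref{t:ab} already gives $w_l(G)=\log|G|\le\w$ and hence $s_l(G)\le\w$ absolutely. The delicate range is $\lambda<|G|\le\kappa$, where one only has $w_l(G)\le\w_1$. Following the divisible-embedding of Theorem~\ref{t:ab}, I embed such $G$ into $L=\prod_{p\in\IP_0}\IQ_p^{\w_1}$ and equip $L$ with a linear group topology whose basic neighborhoods of $0$ are the subgroups $L_C=\{x\in L:x|_C=0\}$ for countable $C\subset\IP_0\times\w_1$. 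The point is to design the forcing so that in the extension any uncountable discrete subset of the induced topology on $G$ can be reduced, via a $\Delta$-system plus pigeon-hole argument on its coordinate supports, to a combinatorial configuration the forcing explicitly destroys. A natural candidate is a two-stage forcing: a ccc stage (finite-support Cohen) arranging $2^\w=\lambda$, followed by a $\sigma$-closed stage (adding subsets of $\w_1$) arranging $2^{\w_1}=\kappa$, with bookkeeping enumerating all names for candidate uncountable discrete subsets of all candidate linear topologies on all candidate abelian groups of size at most $\kappa$, enumerable using $2^{\w_1}=\kappa$.

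The more delicate part is (2): the forcing must uniformly handle \emph{all} abelian groups of cardinality at most $\kappa$ and \emph{all} candidate linear topologies on them, which requires a careful reflection argument to ensure the final extension is closed under the relevant topological constructions. In (1) the principal subtlety is verifying that Hausdorffness survives semi-regularization in full generality; the shift-invariance together with the ``$\sigma$-discreteness-style'' rigidity of group topologies (cf.\ Theorem~\ref{sigmad}) should be just strong enough to guarantee this.
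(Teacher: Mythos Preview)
For part (1) your approach carries a genuine gap that the paper avoids entirely. You invoke the PFA consequence ``regular Hausdorff $+$ countable spread $\Rightarrow |X|\le\mathfrak c$'' and then try to manufacture regularity by passing to the semi-regularization $\tau^{sr}$. You correctly flag the obstacle---Hausdorffness need not survive semi-regularization---but you do not actually overcome it, and your suggested fix (showing $\{1_G\}$ is an intersection of regular-closed neighborhoods using shift-invariance) does not go through: in a mere semitopological group there is no reason inversion or multiplication interact well enough with closures to produce regular-closed separating neighborhoods, and Theorem~\ref{sigmad} is about $\Alt(\kappa)$, not about arbitrary $G$. The paper sidesteps all of this by citing Todor\v{c}evi\'c's result \cite[8.12]{Tod} in the form valid for \emph{Hausdorff} (not necessarily regular) spaces: under PFA every Hausdorff space of countable spread has cardinality $\le\mathfrak c$. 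With that, the proof of (1) is one line and no semi-regularization is needed.

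For part (2) your proposal is on a very different track from the paper, and the track is much harder to complete. You propose a direct two-stage forcing with bookkeeping over all names for abelian groups, linear topologies, and candidate uncountable discrete sets; making this precise (and showing the iteration actually kills every such discrete set without creating new ones) would be substantial work, and your $\Delta$-system sketch does not isolate what combinatorial configuration the forcing is supposed to destroy. The paper instead outsources the set theory entirely: it quotes Juh\'asz \cite[4.10]{Juh02} to get, consistently with $2^\omega=\lambda$ and $2^{\omega_1}=\kappa$, a zero-dimensional space $X\subset\{0,1\}^\lambda$ of size $\kappa$ whose every finite power has countable spread (an HFC-type space). It then proves a purely topological lemma: from any such $X$ one builds, via $C_pC_p(X,\IQ)$ and its $\IQ$-linear hull, a linear Hausdorff group topology of countable spread on $\bigoplus_{p\in\IP_0}\IQ_p^{(|X|)}$, into which every abelian $G$ with $|G|\le|X|$ embeds. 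So the forcing is a single black-box citation, and the group-theoretic content is a $C_p$-theory construction you did not anticipate.
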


\begin{proof} 1. The first statement follows from a result of Todorcevic \cite[8.12]{Tod} saying that under PFA each Hausdorff space $X$ of countable spread has cardinality $|X|\le\mathfrak c$.
\smallskip

2. By \cite[4.10]{Juh02} for any regular uncountable cardinals $\lambda\le\kappa$ it is consistent that $2^\w=\lambda$, $2^{\w_1}=\kappa$ and there exists a subspace $X\subset \{0,1\}^\lambda$ of cardinality $|X|=\kappa$ such that each finite power $X^n$ is hereditarily separable and hence has countable spread. The following lemma completes the proof the statement (2).
\end{proof}

\begin{lemma} Let $X$ be a zero-dimensional space such that each finite power $X^n$ of $X$ has countable spread. Then any infinite abelian group $G$ of cardinality $|G|\le |X|$ has linear spread $s_l(G)=\w$.
\end{lemma}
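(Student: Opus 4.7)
The plan is to construct an explicit linear Hausdorff group topology $\tau$ on $G$ of countable spread, which combined with the trivial lower bound $s_l(G)\ge\w$ will yield $s_l(G)=\w$.

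The first step is a reduction to $G$ divisible: any infinite abelian group embeds into its divisible hull $D$ with $|D|=|G|\le|X|$, and since $s_l$ is monotone under subgroups it suffices to equip $D$ with such a topology. By the classification of divisible abelian groups, $D\cong\bigoplus_{i\in I}A_i$ with each $A_i\in\{\IQ_p:p\in\IP_0\}$ and $|I|\le|D|\le|X|$. Take $A=\bigoplus_{p\in\IP_0}\IQ_p$, a countable discrete divisible abelian group containing each $A_i$; then $D\hookrightarrow A^{(I)}$, and composing with an injection $I\hookrightarrow X$ realizes $D$ as a subgroup of $A^{(X)}=\bigoplus_{x\in X}A$, the group of finitely supported functions $f:X\to A$.

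I would then equip $A^{(X)}$ with the initial topology $\tau$ induced by the homomorphisms $\phi_U:A^{(X)}\to A$, $\phi_U(f)=\sum_{x\in U}f(x)$ (a finite sum since $f$ has finite support), where $U$ ranges over the clopen subsets of $X$ and $A$ is discrete. This $\tau$ is a linear group topology, with basic open subgroups at $0$ given by $\bigcap_{U\in F}\ker\phi_U$ for finite families $F$ of clopens; it is Hausdorff because for any $f\ne 0$ zero-dimensionality of $X$ provides a clopen $U$ meeting $\supp(f)$ in a single point $x_0$, so that $\phi_U(f)=f(x_0)\ne 0$.

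The heart of the argument is to verify that $(A^{(X)},\tau)$ has countable spread. Write $A^{(X)}=\bigcup_{n\in\w}Y_n$ with $Y_n=\{f\in A^{(X)}:|\supp(f)|\le n\}$, and observe that $Y_n$ is the image of $\phi_n:X^n\times A^n\to A^{(X)}$, $((x_k)_k,(a_k)_k)\mapsto\sum_k a_k\delta_{x_k}$. By the universal property of the initial topology, continuity of $\phi_n$ reduces to continuity of each $\phi_U\circ\phi_n:X^n\times A^n\to A$; and since both $U$ and $X\setminus U$ are clopen in $X$, the subset $\{k:x_k\in U\}\subseteq\{1,\dots,n\}$ depends locally constantly on $(x_k)_k$, so $\phi_U\circ\phi_n$ is locally a continuous function of the discrete $A^n$-coordinate alone, hence continuous. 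Now $X^n\times A^n$ has countable spread (a countable disjoint union of copies of $X^n$), continuous images preserve countable spread, and a countable union of subspaces of countable spread has countable spread, so $s(A^{(X)},\tau)\le\w$. The subspace $G\subseteq D\subseteq A^{(X)}$ inherits a linear Hausdorff group topology of countable spread, giving $s_l(G)\le\w$.

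The only non-routine step is the continuity verification for $\phi_n$, which relies essentially on zero-dimensionality of $X$: the clopenness of both $U$ and $X\setminus U$ is precisely what makes the sum-over-clopen maps $\phi_U$ behave locally constantly in the $X^n$-coordinate, so that the countable-spread hypothesis on finite powers of $X$ can be exploited.
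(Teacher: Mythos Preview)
Your argument is correct and in fact cleaner than the paper's. Both proofs build a linear Hausdorff group topology on (a group containing) $G$ by using the zero-dimensionality of $X$ to test finitely supported $X$-indexed sums against clopen sets, and both reduce countable spread to the hypothesis on $X^n$ by exhibiting the carrier group as a countable union of continuous images of $X^n\times(\text{countable discrete})$.

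The difference lies in packaging. The paper works inside the double function space $C_pC_p(X,\IQ)$: it takes $L$ to be the $\IQ$-linear hull of the Dirac measures $\delta_x$, and since $L\cong\IQ^{(|X|)}$ only captures the torsion-free part, it must form the quotient $L/Z$ (where $Z$ is the $\mathbb{Z}$-hull of $\delta(X)$) to recover the quasi-cyclic summands $\IQ_p$, and then work with the product $L\times(L/Z)$. Your choice of the coefficient group $A=\bigoplus_{p\in\IP_0}\IQ_p$ from the outset bypasses this detour entirely: $A^{(X)}$ already contains every divisible group of cardinality $\le|X|$, so no quotient is needed. Your description of the topology via the explicit homomorphisms $\phi_U$ is also more transparent than invoking $C_pC_p$, though the two are equivalent (your $\phi_U$ is exactly evaluation at $\chi_U\in C_p(X,\IQ)$, and these generate the pointwise topology on the linear hull). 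The continuity check for $\phi_n$, which you rightly flag as the crux, is where zero-dimensionality does its work in both arguments; your formulation makes this dependence especially visible.
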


\begin{proof} Consider the space $C_p(X,\IQ)\subset\IQ^X$ of continuous functions from $X$ to the discrete group $\IQ$ of rational numbers, and let $C_pC_p(X,\IQ)$ be the space of continuous functions from $C_p(X,\IQ)$ to $\IQ$. Observe that the topology on $C_pC_p(X,\IQ)$ inherited from the Tychonoff product $\IQ^{C_p(X,\IQ)}$ is linear.

The zero-dimensionality of $X$ can be used to prove that the map $\delta:X\to C_pC_p(X,\IQ)$ assigning to each $x\in X$ the Dirac measure $\delta_x:C_p(X,\IQ)\to\IQ$, $\delta_x:f\mapsto f(x)$, is a topological embedding. Let $L$ be the $\IQ$-linear hull of the set $\delta(X)$ in the $\IQ$-linear space $C_pC_p(X,\IQ)$ and $Z$ be the group hull of $\delta(X)$ in $C_pC_p(X,\IQ)$. It can be shown that $Z$ is a closed subgroup of $L$, so we can consider the quotient group $L/Z$ and notice that it is a divisible torsion group isomorphic to the direct sum $\bigoplus_{p\in\IP}\IQ_p^{(|X|)}$. Here $\IP$ is the set of prime numbers and $\IQ_p=\{z\in\IC:\exists k\in\IN \;z^{p^k}=1\}$ for $p\in\IP$. Consequently, the  group $L\times (L/Z)$ is isomorphic to $\bigoplus_{p\in\IP_0}\IQ_p^{(|X|)}$ where $\IP_0=\IP\cup\{0\}$ and $\IQ_0=\IQ$. It can be shown that the space $L\times (L/Z)$ can be written as the countable union of spaces homeomorphic to continuous images of finite powers of $X$, which implies that the space $L\times(L/Z)$ has countable spread.

Since the group $L\times (L/Z)$ carries a linear Hausdorff group topology of countable spread, so does the group $\bigoplus_{p\in\IP_0}\IQ_p^{(|X|)}$. By \cite[4.1.5 and 4.1.6]{Rob}, every infinite abelian group $G$ of cardinality $|G|\le |X|$ embeds into $\bigoplus_{p\in\IP_0}\IQ_p^{(|G|)}\subset\bigoplus_{p\in\IP_0}\IQ_p^{(|X|)}$ and hence $G$ carries a linear Hausdorff group topology with countable spread, which implies that $s_l(G)=\w$.
\end{proof}

%\begin{remark} By \cite{Ivanov}, the Jensen's Principle $\diamonsuit$ implies the existence of a compact Hausdorff space $X$ of cardinality $|X|=2^{\w_1}$ whose all finite powers $X^n$, $n\in\IN$, are hereditarily separable.
%\end{remark}

 %By \cite{Ivanov}, in the Constructible Universe there exists a compact Hausdorff space $K$ of cardinality  $|K|=2^{\mathfrak c}$ such that each finite power $K^n$ is hereditarily separable and hence has countable spread. On the other hand, PFA (the Proper Forcing Axiom) implies that each Hausdorff space $X$ of countable spread has cardinality $|X|\le\mathfrak c$, see \cite[2.1]{Juh}.

{\bf Acknowledgement.} We thank Siegfried B\"ocherer for bringing the embeddability question to our attention and we thank Istvan Juh\'asz for his hint to
\cite{Juh02}.

\newpage

\end{document}